\documentclass[reqno]{amsart}
\usepackage{graphicx,cite}
\usepackage{amsfonts,amsmath,amsthm,amssymb,latexsym}%
\usepackage[colorlinks,plainpages,citecolor=magenta, linkcolor=blue, bookmarksnumbered, backref, draft=false]{hyperref}

\newcommand{\Real}{\mathbb R}

\renewcommand{\Re}{\mathop{\rm Re}\nolimits}

\newcommand{\tr}{\mathop{\rm tr}\nolimits}

\newcommand{\cF}{\mathcal{F}}

\newtheorem{lem}{Lemma}

\newtheorem{thm}{Theorem}
\newtheorem{rem}{Remark}
\newtheorem{cor}{Corollary}
\newtheorem{pro}{Proposition}
\newtheorem{prp}{Property}

\numberwithin{equation}{section}
\begin{document}
\title[Scattering in Quantum Graphs]{Scattering in Quantum Graphs with Scale-Invariant Vertex Couplings: \\ Resonances, Gaps and (Quasi-)Periodic Transmission}

\author{Khrystyna Buhrii}%
\address[KB]{Ivan Franko National University of Lviv,
	1 Universytetska st., 79602 Lviv, Ukraine}
\email{khrystyna.buhrii@lnu.edu.ua}%

\author{Yuriy Golovaty} \address[YG]{Ivan Franko National University of Lviv,
	1 Universytetska st., 79602 Lviv, Ukraine \and Ukrainian Catholic University, 2a Kozelnytska str., 79026, Lviv, Ukraine}%
\email{yuriy.golovaty@lnu.edu.ua, yuriy.golovaty@ucu.edu.ua}

\author{Rostyslav Hryniv}%
\address[RH]{%Faculty of Applied Sciences,
	Ukrainian Catholic University, 2a Kozelnytska str., 79026, Lviv, Ukraine \and
	%Department of Mathematics and Natural Sciences, the
	University of Rzesz\'{o}w, 1 Pigonia str., 35-310 Rzesz\'{o}w, Poland}%
\email{rhryniv@ucu.edu.ua, rhryniv@ur.edu.pl}

\begin{abstract}
    We study scattering on quantum graphs that consist of a channel with periodically attached resonators under scale-invariant vertex couplings. For this model, we derive explicit formulas for the transmission probability and analyse how it depends on the geometric and coupling parameters.

	Contrary to standard one-dimensional scattering, where the potential barrier becomes transparent at high energy, here the transmission probability does not approach unity; instead, it is periodic or quasi-periodic, with infinitely many energies of complete reflection and of perfect transmission persisting at arbitrarily high energy.

    We further show that the model exhibits strong transmission suppression near the anti-resonant frequencies, resulting in pronounced spectral gaps. The width and structure of these gaps depend on the number of resonators and the coupling parameters. As a result, such quantum graphs can be used to engineer transport properties and to tune spectral filtering.
\end{abstract}
\maketitle

%\tableofcontents

%% =================================
%
\section{Introduction}
%
%% =================================

In standard one-dimensional scattering, a localised potential or point interaction
becomes transparent at high energy: the transmission probability tends to one as the
wavenumber~$k$ grows. We show that this fails for a class of quantum-graph models in
which a line carries resonators attached through scale-invariant vertex couplings,
with perfect transmission and complete reflection occurring at infinitely many
arbitrarily high wavenumbers~$k$. Quantum graphs provide the natural framework for
such systems; they describe, e.g., the evolution of particles in quantum wires,
photonic waveguides, and semiconductor nanostructures
\cite{KuchmentKunyansky1999, Kuchment2004, Exner2007LeakyQuantumGraphs,
BerkolaikoKuchment2013}. Moreover, quantum graphs frequently yield exactly solvable
models that enable rigorous analytical treatment.

In this paper, we study one-dimensional scattering in a channel coupled to a finite
locally periodic array of resonators, modelled as side-attached pendant edges
supporting discrete resonant modes. These edges are connected to the main channel via
scale-invariant vertex coupling conditions \cite[p.~22]{BerkolaikoKuchment2013}. The
self-adjoint vertex conditions that make a quantum graph Hamiltonian well defined were
classified by Kostrykin and Schrader \cite{KostrykinSchrader1999}; they can be realised as norm-resolvent limits of graphs with short internal edges carrying potentials \cite{CheonExnerTurek2010}. The couplings used here form a distinguished scale-invariant subclass depending on several tunable parameters, and by adjusting these parameters one can control the interaction between the propagating wave and the resonators, and
thereby shape the transmission profile. We derive a single closed-form expression for
the transmission probability, valid for any number of resonators, as a function of
the particle energy, providing a flexible framework for designing quantum devices such
as band-pass filters.

This behaviour stands in sharp contrast to scattering in classical media. For locally
periodic potentials on a line---both regular potentials and $\delta$-combs---the
transmission probability tends to one at high energy \cite{GriffithsSteinke}. What
distinguishes the present model is the scale invariance of the vertex couplings:
lacking an intrinsic length scale, they do not become transparent, and the
transmission probability remains periodic or quasi-periodic in the wavenumber; see
Fig.~\ref{Fig1}. A similar persistence of high-energy scattering anomalies was
recently observed for one-dimensional systems with dipole-type singularities
\cite{Gadella2026}.

\begin{figure}[b!]
    \centering
    \includegraphics[scale = 0.3]{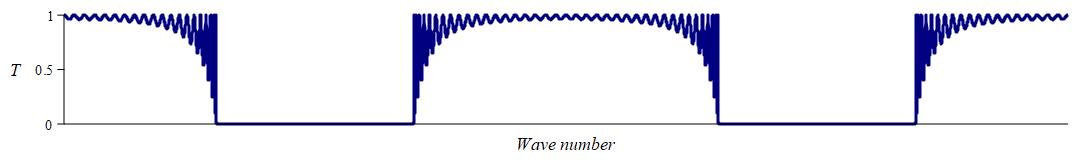}
    \caption{Energy-periodic transmission probability for a channel with $N=20$ resonators: formation of pass and stop bands.}\label{Fig1}
\end{figure}

Our model has a direct predecessor on the line, the $\delta'$-comb. The transmission
probability through a finite locally periodic sequence of $\delta'$ interactions was
shown in \cite{GolovatyHrynivLavrynenko2025} to be periodic in the wavenumber,
exhibiting the same loss of high-energy transparency seen here. These $\delta'$
interactions are not merely formal: they arise as norm-resolvent limits of
Schr\"odinger operators with short-range, dipole-like potentials, as established in
\cite{GolovatyMankoDopovidi, GolovatyMankoUMB, GolovatyHryniv2010,
GolovatyHrynivProcEdinburgh2013, Zolotaryuk08, Zolotaryuk10}. The scale-invariant
vertex couplings on graphs that we use admit an analogous justification: they were
obtained as norm-resolvent limits of Schr\"odinger operators with localised dipole
potentials in \cite{Manko2010, ExnerManko2013, GolovatyAHP2023}. The present paper
replaces the $\delta'$ point interactions by compact resonators attached through
these couplings, yielding a quantum-graph generalisation of the $\delta'$-comb and a
class of exactly solvable scattering models.

Scattering by a single resonator goes back to \cite{ExnerSeresova1994}, where the
vertex conditions ensured continuity of the wave function; the scale-invariant
couplings considered here are more general and need not be continuous. 
The $\delta'$ couplings at multi-link graph vertices, and their
identification with limiting geometric scatterers, were introduced by Exner
\cite{Exner1996}; Floquet--Bloch analysis of the infinite periodic case shows that
the $\delta'_s$ lattice always has infinitely many gaps with bounded widths, while the
gap structure of the $\delta$ lattice depends on number-theoretic properties of the
spacing ratio. More broadly, decorating a graph by attaching compact subgraphs opens
spectral gaps around the eigenvalues of the decoration \cite{SchenkerAizenman2000},
and for periodic chain graphs the resulting band pattern depends on the
commensurability of the edge lengths \cite{BaradaranExnerTater2022}, in agreement
with the role of the ratio $\ell/h$ in our model.

Spectral filtering via scale-invariant couplings has been studied from single
junctions toward the periodic-array setting of the present paper. At a single
Y-junction, $\delta'$-type vertex conditions produce a high-energy blockade between
connected lines \cite{CheonExnerTurek2009}. In a quantum star graph, F\"ul\"op--Tsutsui
couplings yield a tunable band-pass filter whose passband position is controlled by an
external potential on a dedicated line \cite{TurekCheon2013}. For a single junction
coupling a compact graph~$\Gamma$ to an input--output line, transmission resonances
occur at the eigenvalues of~$\Gamma$ and the structure acts as a spectral filter;
conditions under which the passband is exactly flat have been characterised in
\cite{TurekCheon2013JMP, Turek2017}. Our contribution is to pass from a
single decorated junction to a periodic array of $N$ such resonators: we derive
$T_N$ in closed form for any~$N$, establish the persistence of complete reflection
and perfect transmission to arbitrarily large wavenumbers, and analyse the emergence
of the band-and-gap structure as $N\to\infty$. Related filtering behaviour, with narrow near-unity transmission peaks in compact quantum graphs chained in series, has been verified experimentally in microwave networks \cite{AkhshaniBialousSirko2023}. The analysis of such periodic arrays rests on a classical tool of one-dimensional scattering theory.

The transfer-matrix method underlying
our analysis, its composition property, and its application to locally periodic
potentials are treated in \cite{Sanchez2012, Mostafazadeh2020}; explicit formulas for
the transmission through an arbitrary number of identical cells, in terms of the
single-cell scattering data, are classical \cite{Cvetic1981, Erdos1982, GriffithsSteinke}.
For these formulas to apply, one needs the scattering data of a single cell --- here a single resonator --- which for the scale-invariant couplings considered in this paper we compute in closed form.

Our main results concern the transmission probability $T_N$ of an array of $N$ resonators. We first reduce the scattering on each decorated junction to an energy-dependent point interaction on the line, characterised by the resonator's Dirichlet-to-Neumann map; substituting the resulting single-cell data into the locally periodic transfer-matrix formula yields a closed-form expression for $T_N$
valid for every $N$ (Theorem~\ref{thm:TN}). This expression can be recast in a particularly transparent form involving the propagation phase between neighbouring junctions and the resonator phase, on which the subsequent analysis rests. We then derive a number of symmetries
and invariants of $T_N$ with respect to the model parameters, and characterise its
high-energy asymptotics. In particular, we establish the existence of infinitely many
interlaced wavenumbers of complete reflection and perfect transmission, which persist
to arbitrarily high energy (Theorem~\ref{thm:high-energy}). Furthermore, we analyse
the behaviour of $T_N$ near the complete-reflection wavenumbers---where the
transmission suppression sharpens as the number of resonators increases---and examine
its periodic or quasi-periodic nature. Finally, we demonstrate the effect of the
system parameters on the scattering properties and illustrate the emergence of the
spectral band-and-gap structure of the limiting periodic operator in the
infinite-array limit ($N\to\infty$).

The paper is organised as follows. Section~\ref{sec:problem-formulation} describes the
scattering system, a quantum graph consisting of a line with attached resonators.
Section~\ref{sec:single-resonator} treats a single resonator and derives its
$k$-dependent transfer matrix. Section~\ref{sec:local-periodic} analyses an array of
$N$ resonators and derives the closed-form formula for $T_N$.
Section~\ref{sec:properties} studies the properties of $T_N$: its symmetries,
(quasi-)periodicity, high-energy resonances, gap asymptotics, and the high-energy
proximity of Robin and Neumann transmission. Finally,
Section~\ref{sec:asymptotics} relates the large-$N$ behaviour of $T_N$ to the
band-and-gap structure of the limiting periodic operator, and illustrates the effect
of the parameters in the vertex condition on the transmission profile.

%% =================================
%
\section{Problem formulation}\label{sec:problem-formulation}
%
%% =================================

We study quantum scattering in a thin, single-channel waveguide with a finite regular array of side-attached resonators --- finite edges acting as local resonant elements. Such systems are rigorously modelled, in the limit of vanishing waveguide width, by scattering problems on a metric graph~$G$ equipped with a 
Hamiltonian on each edge and suitable vertex conditions~\cite{DellAntonio2012}  ---  i.e., by scattering problems on quantum graphs~\cite{KuchmentZeng2001, 	ExnerPost2005, ExnerPost2009, Post2012}; see~\cite[p.13]{BerkolaikoKuchment2013} for basics of quantum graph theory.

To construct the model, we consider an equally spaced sequence of $N$ points on a line and attach segments of equal length to these points. 
The resulting  graph $G$ is shown in Fig.~\ref{Fig2}.  The vertices of degree $3$ are denoted by $v_1,\dots,v_N$, while the vertices of degree $1$  are denoted by $u_1,\dots,u_N$. The edges $(u_j, v_j)$ represent resonators. The graph $G$ is non-compact due to the presence of two infinite edges, $e_{in}$ and $e_{out}$, called leads, which are attached to the vertices  $v_1$ and $v_N$, respectively. Finally, we introduce two geometric parameters: the length $l$ of each resonator and the distance $h$ between neighboring attachment points.

\begin{figure}[h]
  \centering
  \includegraphics[scale = 0.5]{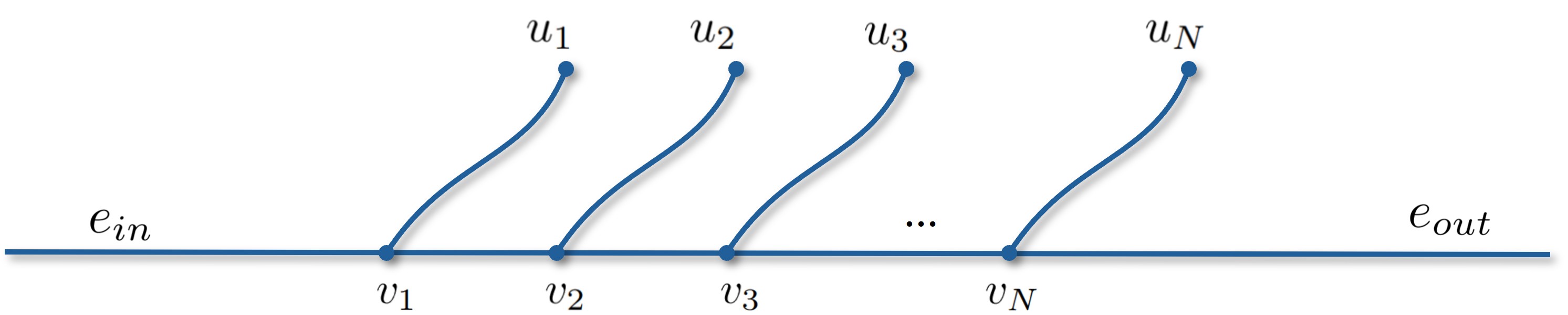}
  \caption{A quantum graph consisting of a scattering channel with resonators}\label{Fig2}
\end{figure}

A function $\psi$ on the graph $G=(V,E)$ is defined as a collection $\{\psi_e\}_{e\in E}$ of functions $\psi_e \colon e \to \mathbb{C}$ on its edges. 
For each vertex $v$ of degree~$3$, we write 
$$
\psi(v)=(\psi_1(v),\psi_2(v),\psi_3(v))
$$ 
for the vector of boundary values (limits of $\psi$ along the three incident edges, indexed as in Fig.~\ref{fig:condAB}) and $$
\psi'(v)=(\psi_1'(v),\psi_2'(v),\psi_3'(v)),
$$
for the vector of 
one-sided derivatives, each $\psi_j'(v)$  taken in the direction from the vertex $v$ into the respective incident edge.

\begin{figure}[t]
	\centering
	\includegraphics[scale = 0.55]{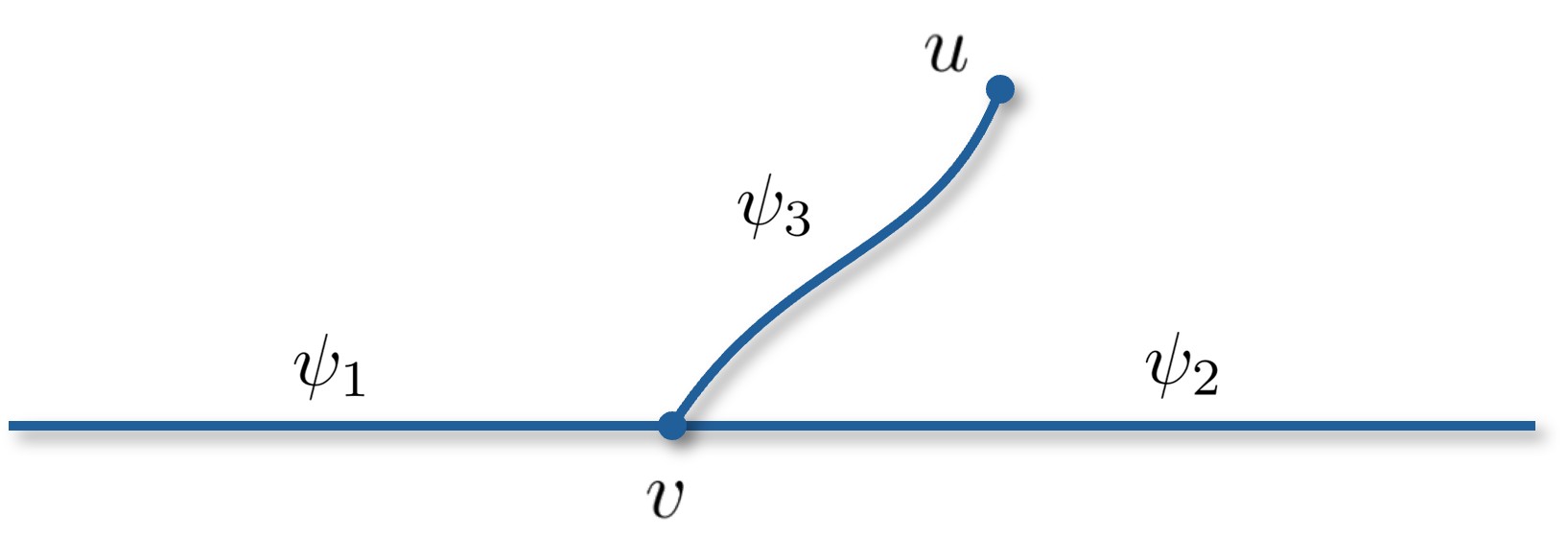}
	\caption{Numbering of the edges adjacent to vertex $v$ of degree~$3$: indices $1$ and $2$ denote the edges belonging to the channel, and index $3$ denotes the resonator edge.}
	\label{fig:condAB}
\end{figure}

On the metric graph $G$, we consider the Hamiltonian $H$ acting as $H\psi = -\psi''$ on each edge. The domain $D(H)$ consists of functions $\psi = \{\psi_e\}_{e\in E}$ in the Sobolev space $\bigoplus_{e\in E} H^2(e)$ that satisfy specific conditions at the vertices to ensure self-adjointness of $H$. Namely, at each vertex $u_j$, we impose the Robin boundary condition
\begin{equation}\label{eq:boundary-conditions-Robin}
	\alpha \psi(u_j) + \beta \psi'(u_j)=0, \quad j=1,\dots,N,
\end{equation}
with $\alpha, \beta \in \mathbb{R}$ and $|\alpha|+|\beta|>0$. At each internal vertex $v$ of degree $3$, we consider one of the following two types of scale-invariant coupling conditions~\cite{BerkolaikoKuchment2013}, parameterized by a fixed vector $\theta = (\theta_1, \theta_2, \theta_3) \in \mathbb{R}^3$:
\begin{itemize}
	\item[\emph{Type I}] \textit{(simple resonance):}
	\begin{equation}\label{eq:coupling-conditions-A}
		\frac{\psi_1(v)}{\theta_1} = \frac{\psi_2(v)}{\theta_2} = \frac{\psi_3(v)}{\theta_3},
		\qquad
		\theta_1 \psi'_1(v) + \theta_2 \psi'_2(v) + \theta_3 \psi'_3(v) = 0.
	\end{equation}
	
	\item[\emph{Type II}] \textit{(double resonance):}
	\begin{equation}\label{eq:coupling-conditions-B}
		\frac{\psi'_1(v)}{\theta_1} = \frac{\psi'_2(v)}{\theta_2} = \frac{\psi'_3(v)}{\theta_3},
		\qquad
		\theta_1 \psi_1(v) + \theta_2 \psi_2(v) + \theta_3 \psi_3(v) = 0.
	\end{equation} 
\end{itemize}
Whenever a component $\theta_j$ vanishes, the corresponding boundary value $\psi_j(v)$ in~\eqref{eq:coupling-conditions-A} (or the derivative $\psi'_j(v)$ in~\eqref{eq:coupling-conditions-B}) is understood to be zero. The terminology ``simple'' and ``double'' resonance refers to the spectral properties of the approximating local operators, as introduced in~\cite{Manko2010}. These coupling conditions arise naturally in the limits of narrow waveguides where dipoles are approximated by $\delta'$-potentials with distinct resonant profiles (see~\cite{ExnerManko2013} for further details and \cite{GolovatyAHP2023} for the general case). 

As follows from the general theory of quantum graphs, the Hamiltonian~$H$ defined above is self-adjoint~\cite{KostrykinSchrader1999, BerkolaikoKuchment2013}.
The scattering process at energy $E = k^2 > 0$ is governed by the \textit{wave function}  ---  a collection $\psi(\cdot,k) = \{\psi_e(\cdot,k)\}_{e\in E}$ of solutions to the equations 
\begin{equation}\label{eq:scattering-problem-definition}
	\psi_e'' + k^2\psi_e=0  \quad \text{on each edge } e\in E,
\end{equation}
which satisfies the boundary conditions~\eqref{eq:boundary-conditions-Robin}  and one of the vertex 
conditions~\eqref{eq:coupling-conditions-A} or~\eqref{eq:coupling-conditions-B} at the internal vertices.
We parametrise the leads by the coordinate $x$ on the real line: the input lead~$e_{\mathrm{in}}$ corresponds to $x \in (-\infty, 0]$ 
(with $v_1$ at $x=0$), and the output lead $e_{\mathrm{out}}$ corresponds to $x \in [L, +\infty)$, where $L=(N-1)h$ is the 
position of $v_N$. 

Among all wave functions, the physically relevant one is the \textit{scattering solution}  ---  the  wave function whose asymptotic behavior on the leads takes the form 
	\begin{alignat}{2}\label{AsympIn}
	&\psi(x,k) = e^{ikx} + r(k) e^{-ikx}&&\quad\text{on } e_{\mathrm{in}},\\
	\label{AsympOut}
	&\psi(x,k) = t(k) e^{ikx} &&\quad\text{on } e_{\mathrm{out}}.
\end{alignat}
Here $r(k)$ and $t(k)$ denote the \textit{reflection} and \textit{transmission amplitudes}, respectively.
As follows from the analysis in Section~\ref{sec:single-resonator}, these amplitudes exist and are unique for every positive $k$.

The corresponding \textit{transmission} and \textit{reflection 
	probabilities} are defined by 
$$
    T(k)=|t(k)|^2, \qquad R(k)=|r(k)|^2.
$$    
The conservation of probability flux,
\begin{equation}\label{T+R=1}
	T(k)+R(k)=1,
\end{equation}
is a consequence of the self-adjointness of $H$~\cite{KostrykinSchrader1999}, 
which ensures that the probability current is conserved at each vertex.

Our goal is to analyse the transmission probability $T_N(k)$ as a function of the wave number~$k$ and the system parameters: the number~$N$ of resonators, the resonator length~$l$, the inter-resonator spacing~$h$, the vertex coupling type (I or II) and parameters~$\theta$, and the boundary conditions at the terminal vertices of the resonators. The main results---explicit closed-form formulas for $T_N(k)$, conditions for its periodicity or quasi-periodicity, quantitative gap-width asymptotics and their connection to the band structure of the infinite periodic system---are derived in Sections~\ref{sec:single-resonator}--\ref{sec:asymptotics}.

%% ==============================================

\section{Scattering by a single resonator}\label{sec:single-resonator}

%% ==============================================

\subsection{Transfer matrix} We consider first the case $N=1$, in which the quantum graph consists of a scattering channel with a single resonator attached to the line at the origin. To solve the scattering problem and determine the amplitudes $r(k)$ and $t(k)$, we construct a linear isomorphism in the two-dimensional space of solutions to  $\psi'' + k^2 \psi = 0$. This mapping relates the solution 
\begin{equation}\label{Psi1}
  \psi_1=a_1e^{ikx}+b_1e^{-ikx},\quad x\in(-\infty,0),
\end{equation}
on the incoming edge $e_{in}$ to the solution 
\begin{equation}\label{Psi2}
\psi_2=a_2e^{ikx}+b_2e^{-ikx}, \quad x\in(0,+\infty),
\end{equation}
on the outgoing edge $e_{out}$. The mapping is described by the transfer matrix $M(k)$ defined by
\begin{equation}\label{eq:transfer-matrix-equation}
    \begin{pmatrix}
    a_2\\
    b_2
    \end{pmatrix}
    =
    M(k)
    \begin{pmatrix}
    a_1\\
    b_1
    \end{pmatrix}.
\end{equation}
For real nonzero~$k$, this transfer matrix belongs to the special group $SU(1,1)$ and thus admits the representation~\cite{GriffithsSteinke}
\begin{equation}\label{eq:transfer-matrix-form}
	M(k) = \begin{pmatrix} w & z \\ z^* & w^* \end{pmatrix}
\end{equation}
with $\det M = |w|^2 - |z|^2 = 1$; here and hereafter, the star denotes complex conjugation. This structure is a consequence of Wronskian conservation for the Schr\"odinger equation with a self-adjoint interaction.

The scattering solution~\eqref{AsympIn}--\eqref{AsympOut} corresponds to $a_1=1$, $b_1=r(k)$, $a_2=t(k)$, $b_2=0$.
Substituting these values into~\eqref{eq:transfer-matrix-equation} and using~\eqref{eq:transfer-matrix-form} gives $t = w+zr$ and 
$0 = z^*+w^*r$, from which $r=-z^*/w^*$ and $t=1/w^*$, yielding~$M(k)$ in terms of scattering amplitudes as 
\begin{equation}\label{TransferMRepr}
	M(k) =
	\begin{pmatrix}
		1/t^*( k) & -r^*(k)/t^*(k)\\
		-r(k)/t(k) & 1/t( k)
	\end{pmatrix}.
\end{equation}
In particular, substituting~\eqref{TransferMRepr} into $|w|^2-|z|^2=1$ recovers the flux conservation relation~\eqref{T+R=1} for transmission and reflection probabilities.

The main goal in this section is to show that scattering by the edge resonator $e = (v,u)$ can effectively be modeled as scattering on the line subject to an energy-dependent point interaction at the vertex $v$. We will demonstrate that the resulting transfer matrix $M(k)$ possesses the $SU(1,1)$ structure \eqref{eq:transfer-matrix-form}, allowing us to utilize the representation \eqref{TransferMRepr} for the multi-resonator problem in Section~\ref{sec:local-periodic}.

We fix a nonzero energy $k^2\in\mathbb{R}$ and a parameter triple $\theta=(\theta_1,\theta_2,\theta_3)\in\mathbb{R}^3$. The conditions of types~I and~II are invariant under scaling of~$\theta$, so only its direction matters. Also, since the edge $e_{\mathrm{in}}$ is parameterised towards its vertex~$v$, the derivative of $\psi_1$ at this vertex in \eqref{eq:coupling-conditions-A} and \eqref{eq:coupling-conditions-B} satisfies $\psi_1'(v) = -\psi_1'(0)$. 

The construction of the transfer matrix relies on the explicit representation of the wave function on the resonator $e=(v,u)$. Upon identifying $e$ with the interval $[0,l]$, where $s=0$ corresponds to the vertex $v$, the component $\psi_3$ (corresponding to edge index~$3$ at vertex~$v$, as in Fig.~\ref{fig:condAB}) satisfying the Robin condition \eqref{eq:boundary-conditions-Robin} at the vertex $u$ is given by
\[
\psi_3(x,k) = \gamma \Bigl( \alpha \,\frac{\sin k(s-l)}{k} - \beta \cos k(s-l) \Bigr), \quad s \in [0,l],
\]
for some constant $\gamma \in \mathbb{C}$. The boundary values at the vertex $v$ are related via the Dirichlet-to-Neumann map $m$ as 
$$\psi_3'(0,k) = m(k)\psi_3(0,k),$$ 
where
\begin{equation}\label{Mk}
	m(k) = \frac{\psi'_3(0,k)}{\psi_3(0,k)} = k \frac{\beta k\tan{kl}-\alpha}{\alpha\tan{kl}+\beta k}.
\end{equation}
The function $m$ is a meromorphic function whose poles correspond to the Dirichlet eigenvalues~$k^2$ of the isolated resonator ($\psi_3(0,k)=0$). 

We next treat the vertex conditions of type I and type II separately. 

\subsection{Vertex conditions of type I}\label{ssec:typeI}  The coupling conditions \eqref{eq:coupling-conditions-A} can be written as
\begin{equation*}
   \theta_1 \psi_2(0) = \theta_2 \psi_1(0), \quad \theta_1 \psi_3(0) = \theta_3 \psi_1(0), \quad  -\theta_1 \psi'_1(0)+\theta_2 \psi'_2(0)+\theta_3 \psi'_3(0) = 0.
\end{equation*}
We first observe that the scattering is completely blocked whenever $\theta_1\theta_2=0$. Indeed, if $\theta_1=0$, then $\psi_1(0)=0$ and hence $r(k)=-1$ and $t(k)=0$ by~\eqref{AsympIn} and \eqref{T+R=1}. On the other hand, if $\theta_2=0$, then $\psi_2(0)=0$, yielding  $t(k)=0$ by~\eqref{AsympOut}.
In both cases, the transmission probability $T(k)$ vanishes identically, which means that the channel is completely blocked and no particle can pass through the vertex~$v$. Therefore, from now on we assume that $\theta_1\theta_2\neq0$.

Using the above coupling conditions and the Dirichlet-to-Neumann map~\eqref{Mk}, we get 
$
	\psi'_3(0) = m(k)\psi_3(0) = \theta_1^{-1}\theta_3\, m(k)\psi_1(0);
$
this allows one to eliminate $\psi_3$ completely and obtain a closed system involving only $\psi_1$ and $\psi_2$:
\begin{equation*}
       \psi_2(0) = \theta_2\theta_1^{-1} \psi_1(0),\qquad
       \psi'_2(0) = \theta_1\theta_2^{-1} \psi'_1(0) - \theta_1^{-1}\theta_2^{-1}\theta_3^2\, m(k)\psi_1(0).
\end{equation*}
These relations define an energy-dependent point interaction on the line.
Using the representations \eqref{Psi1} and \eqref{Psi2} for $\psi_1$ and $\psi_2$, these relations can be rewritten in matrix form as
\begin{equation*}
    \begin{pmatrix}
        1 & 1 \\
        ik & -ik
    \end{pmatrix}
    \begin{pmatrix}
        a_2 \\
        b_2
    \end{pmatrix}= \frac{1}{\theta_1\theta_2}
    \begin{pmatrix}
       \theta_2^2 & \theta_2^2 	 \\
        ik\theta_1^2 -  \theta_3^2\, m(k) &  -ik\theta_1^2 -  \theta_3^2\, m(k)
    \end{pmatrix}
    \begin{pmatrix}
        a_1 \\
        b_1
    \end{pmatrix};
\end{equation*}
solving it for $a_2$ and $b_2$ for $k \ne 0$, we then obtain the explicit form of the transfer matrix for a single resonator coupled by conditions of the first type:
\begin{equation}\label{eq:transfer-matrix-A-conditions}
    M(k) =\frac{1}{2\theta_1\theta_2}
    \begin{pmatrix}
          \phantom{-} \theta_1^2+ \theta_2^2+ ik^{-1}m(k)\theta_3^2 &-\theta_1^2 +\theta_2^2+i k^{-1}m(k)\theta_3^2\\
         -\theta_1^2 +\theta_2^2-i k^{-1}m(k)\theta_3^2 & \phantom{-}\theta_1^2+ \theta_2^2- ik^{-1}m(k)\theta_3^2
    \end{pmatrix}.
\end{equation}
This transfer matrix belongs to~$SU(1,1)$ for all real nonzero~$k$ that are not poles of~$m$, as one verifies directly from the 
explicit expressions for $w$ and $z$ above. Therefore, in view of \eqref{TransferMRepr}, the transmission amplitude is 
\begin{equation*}
  t(k)=\frac{2\theta_1\theta_2}{\theta_1^2+ \theta_2^2- ik^{-1}m(k)\theta_3^2}.
\end{equation*}
This formula remains valid even when $ \theta_1\theta_2= 0$,  although the transfer matrix  $M(k)$ is not defined in this case.

\subsection{Vertex conditions of type II}\label{ssec:typeII} We start with rewriting the couplings in the form
\begin{equation*}
   \theta_1 \psi_2'(0) = -\theta_2 \psi_1'(0), \quad \theta_1 \psi_3'(0) = -\theta_3 \psi_1'(0), \quad  \theta_1 \psi_1(0)+\theta_2 \psi_2(0)+\theta_3 \psi_3(0) = 0,
\end{equation*}
and again observe that the resonator completely blocks the channel when  $\theta_1\theta_2=0$. Indeed, if $\theta_1=0$, then $\psi_1'(0)=0$ and therefore $r(k)=1$ by \eqref{AsympIn}, while if $\theta_2=0$, then $\psi_2'(0)=0$ and hence $t(k)=0$. Therefore, we assume throughout that $\theta_1\theta_2\ne0$. 

Using the Dirichlet-to-Neumann map, we eliminate the resonator component $\psi_3(0)$ via $\psi_3(0)= \psi'_3(0)m(k)^{-1} = -\theta_1^{-1}\theta_3\, m(k)^{-1}\psi_1'(0)$ and obtain a closed system for $\psi_1$ and $\psi_2$, 
\begin{equation*} %\label{PointIntB}
\psi_2(0) = -\theta_1\theta_2^{-1} \psi_1(0) + \theta_1^{-1}\theta_2^{-1}\theta_3^2\, m(k)^{-1}\psi_1'(0),\qquad\psi_2'(0) = -\theta_1^{-1}\theta_2 \psi_1'(0).
\end{equation*}
Using the representation of $\psi_1$ and $\psi_2$ as in~\eqref{Psi1} and \eqref{Psi2}, we rewrite this system in terms of the coefficients $a_j$ and $b_j$, $j=1,2$, as
\begin{equation*}
    \begin{pmatrix}
       1 & 1 \\
        1 & -1
    \end{pmatrix}
    \begin{pmatrix}
        a_2 \\
        b_2
    \end{pmatrix}= \frac{-1}{\theta_1\theta_2}
    \begin{pmatrix}
                \theta_1^2  - ik m(k)^{-1}\theta_3^2 & \theta_1^2 +ik m(k)^{-1}\theta_3^2\\
                \theta_2^2  & -\theta_2^2
    \end{pmatrix}
    \begin{pmatrix}
        a_1 \\
        b_1
    \end{pmatrix}.
\end{equation*}
Solving this system for $a_2$ and $b_2$, we obtain the transfer matrix for a single resonator coupled by conditions of type~II:
\begin{equation}\label{eq:transfer-matrix-B-conditions}
    M(k) =-\frac{1}{2\theta_1\theta_2}
    \begin{pmatrix}
         \theta_1^2+\theta_2^2-\frac{ik\theta_3^2}{m(k)} &\theta_1^2 -\theta_2^2+\frac{ik\theta_3^2}{m(k)}\\
         \theta_1^2 -\theta_2^2-\frac{ik\theta_3^2}{m(k)} & \theta_1^2+\theta_2^2+\frac{ik\theta_3^2}{m(k)}
    \end{pmatrix}.
\end{equation}
By direct verification, this matrix belongs to~$SU(1,1)$ for all real~$k$ that are not zeros of~$m$; in view of \eqref{TransferMRepr}, for such~$k$ the transmission amplitude is explicitly given by
\begin{equation*}
  t(k)=-\frac{2\theta_1\theta_2}{\theta_1^2+ \theta_2^2+ik\theta_3^2/m(k)}.
\end{equation*}

\subsection{Transmission through one resonator} 
The results for both vertex conditions are conveniently stated in terms of the rescaled Dirichlet-to-Neumann coefficient
\begin{equation}\label{eq:mu}
	\mu(k)=\frac{m(k)}{k}=\frac{\beta k\tan{kl}-\alpha}{\beta k+\alpha\tan{kl}}.
\end{equation}

\begin{pro}
	Let $G$ be a quantum graph consisting of a scattering channel with a single resonator $e = (v,u)$ of length $l$ subject to the Robin boundary condition \eqref{eq:boundary-conditions-Robin} at the vertex~$u$. The transmission probability through the channel is given by
	\begin{equation}\label{eq:transmission-prob-A-conditions}
		T(k) = \frac{4\theta_1^2\theta_2^2}{(\theta_1^2+\theta_2^2)^2+\theta_3^4\mu^2(k)}
	\end{equation}
	for coupling conditions of type~I, and by
	\begin{equation}\label{eq:transmission-prob-B-conditions}
		T(k) = \frac{4\theta_1^2\theta_2^2}{(\theta_1^2+\theta_2^2)^2+\theta_3^4\,\mu^{-2}(k)}
	\end{equation}
	for coupling conditions of type~II. 
	
	In the low-energy limit $k \to 0$, if $\alpha \neq 0$, the transmission probability $T(k)$ tends to $0$ for the first type conditions and to $4\theta_1^2\theta_2^2/(\theta_1^2 + \theta_2^2)^2$ for the conditions of the second type. Conversely, if $\alpha = 0$, $T(k)$ tends to $4\theta_1^2\theta_2^2/(\theta_1^2 + \theta_2^2)^2$ for type~I and to $0$ for type~II.
\end{pro}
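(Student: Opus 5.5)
The proof is a direct computation from the transmission amplitudes already obtained in Subsections~\ref{ssec:typeI} and~\ref{ssec:typeII}, followed by a low-energy expansion of $\mu$.

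\emph{Transmission probability.} For type~I we take $t(k)=2\theta_1\theta_2/(\theta_1^2+\theta_2^2-ik^{-1}m(k)\theta_3^2)$ and write $k^{-1}m(k)=\mu(k)$ as in \eqref{eq:mu}. For real $k\neq0$ that is not a pole of $m$, the function $\mu(k)$ is real, because $\alpha,\beta\in\Real$ and $\tan kl$ is real. The denominator therefore has real part $\theta_1^2+\theta_2^2$ and imaginary part $-\theta_3^2\mu(k)$. Hence $T=|t|^2$ is exactly \eqref{eq:transmission-prob-A-conditions}.

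For type~II the same argument applies with $ik\theta_3^2/m(k)=i\theta_3^2\mu^{-1}(k)$, which gives \eqref{eq:transmission-prob-B-conditions}. At the exceptional wavenumbers, namely the poles of $\mu$ for type~I and the zeros of $\mu$ for type~II, we define $T$ by continuity, so $T=0$ there. To justify this we return to the coupling conditions directly. For type~I, a pole of $m$ means $\psi_3(0)=0$, which together with $\theta_3\neq0$ forces $\psi_1(0)=\psi_2(0)=0$. A wave $t e^{ikx}$ vanishing at $0$ has $t=0$, consistent with the formula. Type~II is handled the same way using $\psi'$.

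\emph{Low-energy limit.} As $k\to0$ we have $\tan kl = kl+O(k^3)$, so
\[
\mu(k)=\frac{\beta k^2 l-\alpha+O(k^4)}{k(\beta+\alpha l)+O(k^3)}.
\]

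If $\alpha\neq0$, the numerator tends to $-\alpha\neq0$ and the denominator is $O(k)$, so $|\mu(k)|\to\infty$. One must check what happens when $\beta+\alpha l=0$. In that case the denominator is $\alpha(\tan kl-kl)=O(k^3)$, and $|\mu|\to\infty$ still holds. It follows that $T\to0$ for type~I, while $\mu^{-2}\to0$ gives $T\to 4\theta_1^2\theta_2^2/(\theta_1^2+\theta_2^2)^2$ for type~II.

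If $\alpha=0$, then $\beta\neq0$ and $\mu(k)=\tan kl\to0$. The limits are therefore interchanged: type~I tends to $4\theta_1^2\theta_2^2/(\theta_1^2+\theta_2^2)^2$, and type~II tends to $0$ because $\mu^{-2}\to\infty$.

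The only delicate step is the degenerate subcase $\beta+\alpha l=0$ when $\alpha\neq0$, which is why we keep the next term of $\tan kl$. Beyond that, we only need to note that $\mu$ is real on the real axis, so the squared modulus splits into the sum of the two squares displayed in the formulas.
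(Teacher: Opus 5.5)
Your proposal is correct and follows the same route as the paper: compute $T=|t|^2$ from the previously derived amplitudes, using that $\mu$ is real on the real axis, then read off the limits from the small-$k$ behaviour of $\mu$. Your separate treatment of the subcase $\beta+\alpha l=0$, where the paper's asymptotic $\mu(k)\sim-\alpha/(k(\beta+\alpha l))$ breaks down but $|\mu|\to\infty$ still holds because the denominator is $\alpha(\tan kl-kl)\sim\alpha l^3k^3/3$, is a genuine refinement, as is your direct check that $T=0$ at the poles (type~I) or zeros (type~II) of $\mu$.
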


Indeed, the transmission formulas follow immediately from $T(k)=|t(k)|^2$ and the explicit expressions for $t(k)$ derived above. The low-energy limits follow from the behaviour of $\mu(k)$ as $k\to 0$: if $\alpha\ne 0$, then $\mu(k)\sim -\alpha/(k(\beta+\alpha l))\to\infty$, giving $T\to 0$ for type~I and $T\to 4\theta_1^2\theta_2^2/(\theta_1^2+\theta_2^2)^2$ for type~II; if $\alpha=0$, then $\mu(k)\sim kl\to 0$, reversing the roles.

We observe that the condition $\theta_3=0$ suppresses the interaction between the scattering channel and the resonator: the vertex conditions then force $\psi_3(v)=0$ for type~I (or $\psi_3'(v)=0$ for type~II). In this case 
$$
    T(k) = \frac{4\theta_1^2\theta_2^2}{(\theta_1^2+\theta_2^2)^2}
$$ 
by~\eqref{eq:transmission-prob-A-conditions} and~\eqref{eq:transmission-prob-B-conditions}, which is independent of~$k$ and ranges from perfect transmission $T= 1$ when $\theta_1=\theta_2$ to nearly zero as $\theta_1/\theta_2\to 0$ or $\theta_1/\theta_2\to\infty$. 
Any nonzero~$\theta_3$, however, introduces energy-dependent scattering and produces a band-pass filter structure  ---  alternating zones of near-perfect transmission and near-complete reflection  ---  as illustrated in Fig.~\ref{fig:N=1} and studied systematically in Sections~\ref{sec:properties} 
and~\ref{sec:asymptotics}.

\begin{figure}[h]
	\centering
	\includegraphics[scale = 0.32]{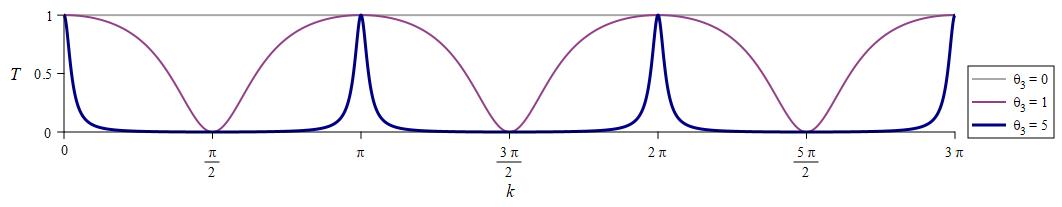}
	\caption{Plots of $T(k)$ for different values of $\theta_3$. Parameters: $\theta_1=\theta_2=1$, $l=h=1$,
		$\alpha=0$, and $\beta=1$.}
	\label{fig:N=1}
\end{figure}

\begin{rem}\rm
	Comparing~\eqref{eq:transfer-matrix-A-conditions} and~\eqref{eq:transfer-matrix-B-conditions}, one sees that the type-II transfer matrix~\eqref{eq:transfer-matrix-B-conditions} is obtained from the type-I one~\eqref{eq:transfer-matrix-A-conditions} by the substitution $m(k)\mapsto -m(k)^{-1}$. This observation underlies the duality between type-I and type-II conditions established in Property~\ref{prp:duality} of Section~\ref{sec:properties}.
\end{rem}

%% ======================================
\section{Scattering by multiple resonators} \label{sec:local-periodic}
%% ======================================

We now study the $N$-resonator system using the transfer matrix formalism~\cite{Mostafazadeh2020} and the explicit transfer matrices derived in Section~\ref{sec:single-resonator}. If the resonators are placed at equal distances and with the same vertex conditions, the system acquires a locally periodic structure along the channel. The construction of transfer matrices for such systems appears in many sources; see, for example, the detailed discussion in~\cite{GriffithsSteinke}.

First, we note that while the transmission probability is invariant under translations of the resonator along the channel, the transfer matrix is not. This reflects the fact that the transfer matrix must account for the phase accumulated by the wave as it propagates. To model a resonator attached at a point $x=h$, we represent the wave functions on either side of $x=h$ as linear combinations of $e^{ik(x-h)}$ and $e^{-ik(x-h)}$, effectively shifting the local origin to the attachment point. This basis shift yields the relation
\[
\begin{pmatrix}
	a_2 e^{ikh} \\
	b_2 e^{-ikh}
\end{pmatrix} = M(k)\begin{pmatrix}
	a_1 e^{ikh} \\
	b_1 e^{-ikh}
\end{pmatrix}
\]
for the coefficients $a_j$ and $b_j$ defined in~\eqref{Psi1} and \eqref{Psi2}. 
Denoting the transfer matrix at $x=h$ by $M(k,h)$ and introducing the free propagation matrix
\begin{equation*}
	P_h(k) = \begin{pmatrix} e^{ikh} & 0 \\ 0 & e^{-ikh} \end{pmatrix},
\end{equation*}
which corresponds to propagation over a distance $h$, we immediately conclude that 
\[
M(k,h)= P_h^{-1}(k)M(k)P_h(k).
\]

More generally, consider a system of $N$ resonators attached to the scattering channel at the points $x_j=(j-1)h$, $j=1,\dots,N$. The total transfer matrix $M_N(k)$ for the system is given by the ordered product
\begin{equation*}
	M_N(k) = M(k,x_{N})\cdots M(k,x_2)M(k,x_1),
\end{equation*}
where $M(k,x_j) = P_h(k)^{1-j}\, M(k)\, P_h(k)^{j-1}$ denotes the transfer matrix through the point $x_j$. 
Substituting this representation and rearranging the factors, we arrive at
\begin{multline*}
	M_N(k) = P_h^{1-N}(k)M(k)P_h^{N-1}(k)P_h^{2-N}(k)\cdots P_h^{-1}(k)M(k)P_h(k)M(k) \\
	= P_h^{-N}(k) \bigl(P_h(k)M(k)\bigr)^N = P_h^{-N}(k) Q(k,h)^N,
\end{multline*}
with $Q(k,h)=P_h(k)M(k)$ representing a single step of the scattering process, combining scattering by one resonator with free propagation over the distance~$h$.

Using the generic form~\eqref{eq:transfer-matrix-form} of the transfer matrix $M(k)$, we get
\begin{equation*}
	Q(k,h)=\begin{pmatrix}
		we^{ikh} & ze^{ikh} \\
		z^*e^{-ikh} & w^*e^{-ikh}
	\end{pmatrix},
\end{equation*}
which remains an $SU(1,1)$ matrix. The $N$-th power of $Q$ can be obtained in explicit terms by noting that, since $\det Q = 1$, the Cayley--Hamilton theorem yields the recurrence relation 
\begin{equation}\label{eq:CH}
	Q^{n+2} = 2\xi \, Q^{n+1} - Q^n,
\end{equation}
where $\xi$ denotes the half-trace of $Q$,  
\begin{equation*}
	\xi(k,h)=\frac{1}{2}\tr Q(k,h)=\Re(we^{ikh}).
\end{equation*}
This implies that the powers of $Q$ satisfy 
$Q^{n+2} = U_{n+1}(\xi)Q - U_n(\xi)I$, with $U_n$ denoting the Chebyshev polynomials of the second kind $U_n(\xi)$ defined by the initial values and the recursive relation 
\begin{equation}\label{eq:Chebyshev-identity}
	U_0(\xi)=1, \qquad U_1(\xi)=2\xi, \qquad U_{n+2}(\xi)=2\xi\,U_{n+1}(\xi)-U_{n}(\xi),
\end{equation}	
as explained in detail, e.g., in~\cite{GriffithsSteinke}.
Ultimately, this results in the explicit formula for the transfer matrix, 
\begin{equation*}
	M_N(k)=P_h^{-N}(k)\left(U_{N-1}\big(\xi(k,h)\big)\,Q(k,h) - U_{N-2}\big(\xi(k,h)\big)\, I\right).
\end{equation*}

By construction, $M_N$ belongs to $SU(1,1)$ and thus is related to the reflection and transmission coefficients of the whole $N$-resonator system via~\eqref{TransferMRepr}. In particular, by identifying the $(2,1)$-entry of $M_N$ as \(e^{ikNh} U_{N-1}(\xi) [Q]_{21}\), where $[Q]_{21}=z^*e^{-ikh}$ from the explicit form of $Q(k,h)$ above, we find that
\begin{equation*}
	\frac{r_N(k)}{t_N(k)}=-U_{N-1}(\xi)z^*(k)e^{ik(N-1)h}.
\end{equation*}
Taking the absolute squared value and using~\eqref{T+R=1}, we obtain
\begin{equation*}
	\frac{1-T_N(k)}{T_N(k)}=|z(k)|^2 U^2_{N-1}(\xi),
\end{equation*}
and hence the final formula, 
\begin{equation}\label{TNzxi}
	T_N(k)=\frac{1}{1+|z(k)|^2\,U_{N-1}^2\big(\xi(k)\big)}.
\end{equation}

We note that the general formula \eqref{TNzxi} for the transmission probability across locally periodic point interactions is well-known in the literature (see, e.g., \cite{GriffithsSteinke}). However, its practical application requires the explicit construction of the transfer matrix $M(k)$ for the specific basic cell—i.e., determining the functions $w(k)$ and $z(k)$---which we carried out for our resonator models in Section 3. Substituting those results into \eqref{TNzxi} yields the main analytical result for the multi-resonator chain.

The following theorem summarizes the transmission probabilities for the $N$-resonator model.

\begin{thm}\label{thm:TN}
	Let $G$ be a metric graph consisting of a scattering channel with $N$ identical resonators of length~$l$, attached at points $x_j=(j-1)h$, $j=1,\dots,N$. 
	Define the Hamiltonian on~$G$ as in Section~\ref{sec:problem-formulation}, with Robin boundary condition~\eqref{eq:boundary-conditions-Robin} 
	at vertices~$u_j$ and identical vertex conditions at~$v_j$, of type~I or type~II, with the same parameter vector~$\theta$. Let $\mu$ be given by~\eqref{eq:mu}.
	
	(i) Suppose that the resonators are coupled to the channel by vertex conditions of type~I and $\theta_1\theta_2\neq0$. Then the transmission probability is given by
	\begin{equation}\label{eq:TN-A}
		T_N(k)=\frac{1}{1+\dfrac{(\theta_1^2-\theta_2^2)^2+\mu^2(k)\theta_3^4}{4\theta_1^2\theta_2^2}\;\,
			U_{N-1}^2\!\left(\dfrac{(\theta_1^2+\theta_2^2)\cos kh-\mu(k)\theta_3^2\sin kh}{2\theta_1\theta_2}\right)}.
	\end{equation}
	
	\smallskip
	
	\noindent
	(ii) Suppose that the resonators are coupled to the channel by vertex conditions of type~II and $\theta_1\theta_2\neq0$. Then the transmission probability is given by
	\begin{equation}\label{eq:TN-B}
		T_N(k)=\frac{1}{1+\dfrac{(\theta_1^2-\theta_2^2)^2+\mu^{-2}(k)\theta_3^4}{4\theta_1^2\theta_2^2}\;\,
			U_{N-1}^2\!\left(\dfrac{(\theta_1^2+\theta_2^2)\cos kh+\mu^{-1}(k)\theta_3^2\sin kh}{2\theta_1\theta_2}\right)}.
	\end{equation}
	
	\smallskip
	
	\noindent
	(iii) If $\theta_1\theta_2=0$, then $T_N(k)=0$ for all nonzero $k\in\Real$. In this case, the scattering channel is completely blocked by the resonators, and no tunneling occurs.
\end{thm}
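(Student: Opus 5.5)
The plan is to substitute the single-resonator transfer matrices \eqref{eq:transfer-matrix-A-conditions} and \eqref{eq:transfer-matrix-B-conditions} into the general formula \eqref{TNzxi}. For type~I, we read off
\[
w=\frac{\theta_1^2+\theta_2^2+i\mu(k)\theta_3^2}{2\theta_1\theta_2},\qquad
z=\frac{\theta_2^2-\theta_1^2+i\mu(k)\theta_3^2}{2\theta_1\theta_2},
\]
using $k^{-1}m(k)=\mu(k)$. Since $\mu$ is real for real $k$ (as $\alpha,\beta,l$ are real), we immediately get $|z|^2=\bigl((\theta_1^2-\theta_2^2)^2+\mu^2\theta_3^4\bigr)/(4\theta_1^2\theta_2^2)$. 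We also get
\[
\xi=\Re(we^{ikh})=\frac{(\theta_1^2+\theta_2^2)\cos kh-\mu\theta_3^2\sin kh}{2\theta_1\theta_2}.
\]
Inserting these into \eqref{TNzxi} yields \eqref{eq:TN-A}.

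For type~II we proceed identically. From \eqref{eq:transfer-matrix-B-conditions}, $ik/m=i/\mu$, so
\[
w=-\frac{\theta_1^2+\theta_2^2-i\mu^{-1}\theta_3^2}{2\theta_1\theta_2},\qquad
z=-\frac{\theta_1^2-\theta_2^2+i\mu^{-1}\theta_3^2}{2\theta_1\theta_2}.
\]
This gives $|z|^2=\bigl((\theta_1^2-\theta_2^2)^2+\mu^{-2}\theta_3^4\bigr)/(4\theta_1^2\theta_2^2)$. For the half-trace, $\Re(we^{ikh})=-\bigl((\theta_1^2+\theta_2^2)\cos kh+\mu^{-1}\theta_3^2\sin kh\bigr)/(2\theta_1\theta_2)$. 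This is the negative of the argument in \eqref{eq:TN-B}. The sign is harmless because $U_{N-1}(-\xi)=(-1)^{N-1}U_{N-1}(\xi)$, which follows by induction from \eqref{eq:Chebyshev-identity}, and only $U_{N-1}^2$ enters. This gives \eqref{eq:TN-B}.

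The main obstacle is the exceptional wavenumbers. These are the poles of $\mu$ for type~I, the zeros of $\mu$ for type~II, and possibly $\beta k+\alpha\tan kl=0$ together with points where $\tan kl$ is infinite. At these points $M(k)$ is undefined, so the product formula does not apply directly. I would handle them by continuity or a limiting argument.

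At a pole of $\mu$ in type~I, the resonator Dirichlet problem holds with $\psi_3(0)=0$, which forces $\psi_1(v)=0$ at that vertex. The graph then decouples at that vertex, so $T_N=0$. The right-hand side of \eqref{eq:TN-A} also tends to $0$, provided $U_{N-1}(\xi)$ does not vanish in the limit. One checks that $\xi\sim-\mu\theta_3^2\sin kh/(2\theta_1\theta_2)$ blows up unless $\sin kh=0$. If $\sin kh=0$, $|z|^2U_{N-1}^2$ still grows like $\mu^2$, since then $|\xi|\ge1$ and so $|U_{N-1}|\ge1$. Alternatively, one can simply interpret \eqref{eq:TN-A} at such $k$ as its continuous extension. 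I would state that convention explicitly and verify it for the direct blocking case, treating type~II analogously.

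Part (iii) follows from the observations in Sections~\ref{ssec:typeI} and~\ref{ssec:typeII}. If $\theta_1\theta_2=0$ at $v_1$, the scattering solution satisfies $\psi_1(v_1)=0$ or $\psi_2(v_1)=0$ for type~I, or the corresponding derivative condition for type~II. The case $\theta_1=0$ forces total reflection at $v_1$. The case $\theta_2=0$ forces the wave to the right of $v_1$ to be generated only by reflections from the right part of the graph, with no incoming wave from infinity. A flux argument on the subgraph to the right of $v_1$ then shows it carries no outgoing current. Hence $t_N=0$, and so $T_N=0$.
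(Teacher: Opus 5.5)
Your proposal is correct and follows the paper's own route: you read off $w$ and $z$ from \eqref{eq:transfer-matrix-A-conditions} and \eqref{eq:transfer-matrix-B-conditions}, substitute them into \eqref{TNzxi}, and get part (iii) from the vertex-blocking observations of Section~\ref{sec:single-resonator}. Your extra care makes explicit several points that the paper leaves implicit or defers to Remark~\ref{rem:corners} and Theorem~\ref{thm:high-energy}: you note that the type-II half-trace is the negative of the argument in \eqref{eq:TN-B}, which is harmless since only $U_{N-1}^2$ enters; you interpret the formulas by continuity at the poles (type~I) or zeros (type~II) of $\mu$, where your decoupling argument tacitly needs $\theta_3\neq0$; and for $\theta_2=0$ your flux argument is valid, though the condition at $v_N$ already gives $t=0$ directly.
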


Finally, we observe that for a single resonator, we have $U_0(\xi)=1$, and therefore 
\begin{equation*}
	T_1(k)=\frac{1}{1+|z(k)|^2}.
\end{equation*}
Identifying $|z(k)|^2$ from~\eqref{eq:transfer-matrix-A-conditions} 
and~\eqref{eq:transfer-matrix-B-conditions}, we immediately recover the transmission probabilities \eqref{eq:transmission-prob-A-conditions} and \eqref{eq:transmission-prob-B-conditions} obtained earlier for the single-resonator case.

%% ==========================================
\section{Properties of the transmission probability}\label{sec:properties}
%% ==========================================

In this section, we analyze the structural properties of $T_N$ derived in Theorem~\ref{thm:TN}. We begin by listing invariance and symmetry properties that follow immediately from the explicit formula and reduce the effective parameter space. We then establish a duality between the two types of vertex and boundary conditions. In Subsection~\ref{ssec:phase-rep}, a special representation of $T_N$ in terms of accumulated phases is suggested, which enables derivation of several analytical results on large-energy behavior: periodicity or quasi-periodicity of $T_N$, persistence of transmission and reflection resonances at arbitrarily high energies, quantitative behaviour of $T_N$ in the transmission suppression zones, and asymptotic proximity of $T_N$ for Robin and Neumann conditions. 

We observe that in the case $\theta_3=0$, the system reduces to a $\delta'$-comb, i.e., an array of energy-independent point interactions
\[
	\psi(x_j{+}0)-\nu\,\psi(x_j{-}0)=0,\quad
	\nu\,\psi'(x_j{+}0)-\psi'(x_j{-}0)=0,
	\quad\nu=\theta_2/\theta_1,
\]
which was studied in detail in~\cite{GolovatyHrynivLavrynenko2025}. 
The present paper extends the $\delta'$-comb model by including resonators through non-trivial vertex conditions with~$\theta_3\ne0$. Also, as explained in Subsections~\ref{ssec:typeI} and \ref{ssec:typeII}, no transmission occurs through the vertices~$v_j$ when $\theta_1\theta_2=0$. Therefore, throughout this and the next section, we make a standing assumption that $\theta_1\theta_2\theta_3\ne0$. 

%%  ---  Invariance properties  --- 

\subsection{Symmetry and invariance}
In what follows, we omit the subscript $N$ in the transmission probability $T_N$ if the discussed property does not depend on the number~$N$ of resonators.

\begin{prp}[Scale invariance of coupling parameters]\label{prp:scale-invariance}\normalfont
	The transmission probability $T$ is 
	homogeneous of degree zero in $(\theta_1,\theta_2,\theta_3)$:
	\[
	T(k;\lambda\theta_1,\lambda\theta_2,\lambda\theta_3)
	= T(k;\theta_1,\theta_2,\theta_3)
	\quad\text{for all }\lambda\in\mathbb{R}\setminus\{0\}.
	\]
\end{prp}

This property is immediate from~\eqref{eq:TN-A}--\eqref{eq:TN-B}, since $\theta$ appears only as ratios $\theta^2_j/(\theta_1\theta_2)$.

\begin{prp}[Evenness in coupling parameters]\label{prp:evenness}\normalfont
	The transmission probability is an even function of each $\theta_j$ 
	separately:
	\[
	T(k;\pm\theta_1,\pm\theta_2,\pm\theta_3)
	= T(k;\theta_1,\theta_2,\theta_3).
	\]
\end{prp}
This follows from~\eqref{eq:TN-A}--\eqref{eq:TN-B} and the parity 
	relation $U_n(-x)=(-1)^n U_n(x)$.
    
\begin{prp}[Spatial parity]\label{prp:lr-symmetry}\normalfont
	The transmission probability is invariant under interchange of 
	$\theta_1$ and $\theta_2$:
	\[
	T(k;\theta_1,\theta_2,\theta_3)=T(k;\theta_2,\theta_1,\theta_3).
	\]
\end{prp}

This is immediate from~\eqref{eq:TN-A}--\eqref{eq:TN-B} and expresses the parity symmetry of the model under
the spatial reflection that interchanges the channels $e_{\mathrm{in}}$ and $e_{\mathrm{out}}$.

\begin{prp}[Geometric scaling]\label{prp:geometric-scaling}\normalfont
	The transmission probability satisfies
	\[
	T(k;h,l) = T(\lambda k;\, h/\lambda,\, l/\lambda)
	\quad\text{for all }\lambda>0.
	\]
\end{prp}

This follows from the fact that $\mu(k)$ and $\cos kh$ depend on $k$, $l$, $h$ only through the products $kl$ and $kh$, so the formula~\eqref{eq:TN-A} (and~\eqref{eq:TN-B}) is invariant under $k\mapsto\lambda k$, $l\mapsto l/\lambda$, $h\mapsto h/\lambda$. In particular, choosing $\lambda=h$ shows that $h$ may be set to~$1$ without loss of generality.

Together, Properties~\ref{prp:scale-invariance}--\ref{prp:geometric-scaling}
reduce the parameter space substantially. The vector~$\theta$ may be 
restricted to the fundamental domain
\[
\bigl\{\theta\in S^2 \colon 
\theta_1\ge\theta_2 > 0,\;\theta_3 > 0\bigr\},
\]
and $h$ may be normalized to~$1$, leaving $kl$ and $l/h$ as the only geometric parameters.

We next point out a symmetry between the two types of vertex 
conditions at~$v_j$ and the two standard boundary conditions at the vertices~$u_j$. In the following, we use superscripts to 
distinguish these cases: $T^{\mathrm{I,N}}$ and $T^{\mathrm{I,D}}$ denote the 
transmission probability for type-I vertex conditions with Neumann 
 and Dirichlet conditions at~$u_j$, 
respectively, and similarly for type-II.

\begin{prp}[Neumann--Dirichlet duality]\label{prp:duality}\normalfont
	The transmission probability is invariant under simultaneous 
	interchange of vertex coupling type and endpoint boundary condition:
	\begin{equation}\label{eq:duality}
		T^{\mathrm{I,N}}(k) = T^{\mathrm{II,D}}(k), \qquad T^{\mathrm{I,D}}(k) = T^{\mathrm{II,N}}(k).
	\end{equation}
\end{prp}

The key observation is that the formula for type-I conditions becomes 
that for type-II upon replacing $\mu$ by $-\mu^{-1}$,
cf.~\eqref{eq:TN-A}--\eqref{eq:TN-B}. Switching from Neumann to 
Dirichlet boundary conditions performs exactly this substitution (cf.~\eqref{eq:mu}):
\[
\mu^{(N)}(k) = \tan kl 
\quad\longmapsto\quad 
\mu^{(D)}(k) = -\cot kl = -\bigl(\mu^{(N)}(k)\bigr)^{-1},
\]
which yields~\eqref{eq:duality}.

In the transmission-line analogy,  $\mu(k)$ plays the role of a local admittance and $\mu(k)^{-1}$
that of an impedance; Property~\ref{prp:duality} then states that switching between the two descriptions is equivalent to swapping the vertex-coupling type. A direct consequence is that the roles of zeros and poles of $\mu$ are interchanged: for type-I, zeros of $\mu$ produce perfect transmission and poles produce 
complete reflection; for type-II, these roles are reversed. In view of Property~\ref{prp:duality}, it suffices to analyze type-I conditions with Neumann or Dirichlet endpoints; the type-II cases follow by duality. As we show in Property~\ref{prp:Robin-Neumann}, for type-I vertex and Robin conditions, the transmission probability approaches, at large energies, the one for Neumann conditions.

%% ---  ---  ---  ---  ---  ---  ---  ---  ---  ---  ---  ---  ---  ---  ---  --- --
\subsection{Phase representation of the transmission
	probability}
\label{ssec:phase-rep}
%% ---  ---  ---  ---  ---  ---  ---  ---  ---  ---  ---  ---  ---  ---  ---  --- --

As explained in the previous subsection, the study of $T$ can be restricted to the case $\theta_j>0$, and this will be assumed throughout Sections~\ref{ssec:phase-rep} and~\ref{ssec:large-energy}. We also fix the type-I vertex conditions; the type-II conditions are treated similarly. We set
\begin{equation}\label{eq:AB}
	A = \frac{\theta_1^2+\theta_2^2}{2\theta_1\theta_2}\ge1,
	\qquad
	B = \frac{\theta_3^2}{2\theta_1\theta_2} > 0;
\end{equation}
then the entries of the transfer
matrix~\eqref{eq:transfer-matrix-A-conditions} satisfy
\begin{equation}\label{eq:wz-AB}
	w(k) = A+iB\mu(k),
	\qquad
	|z(k)|^2 = (A^2-1)+B^2\mu^2(k),
\end{equation}
and the half-trace is
$\xi(k)=\Re(we^{ikh})=A\cos kh-B\mu(k)\sin kh$.

\medskip
\noindent\textit{Phase variables.}
We introduce two phases that give the representation of $T$ its natural structure.
The \emph{propagation phase} $\psi(k)=kh$ is the de~Broglie phase accumulated by a free particle of
wavenumber~$k$ traveling from one attachment point
to the next.
The \emph{resonator phase} $\phi(k)$ is derived from the
relation $\mu(k)=\tan\phi(k)$ and equals
\begin{equation}\label{eq:phases}
	\phi(k) = kl-\delta(k),
	\qquad
	\delta(k) = \arctan\frac{\alpha}{\beta k},
\end{equation}
with $\delta(k)\equiv0$ for Neumann~($\alpha=0$),
$\delta(k)\equiv\tfrac\pi2$ for Dirichlet~($\beta=0$),
and $\delta(k)=O(k^{-1})$ as $k\to\infty$ for
Robin conditions~($\alpha\beta\ne0$). Here, $\delta(k)$ gives the phase shift of the free wave at the vertex~$u_j$ caused by the respective condition. 

To derive $\delta$ in the case $\alpha\beta\ne0$, we divide the numerator and
denominator of~\eqref{eq:mu} by $\beta k$ and use
$\tan\delta(k)=\alpha/(\beta k)$ to arrive at
\[
\mu(k) = \frac{\beta k \tan kl - \alpha}{\beta k + \alpha \tan kl} = \frac{\tan kl - \tan\delta(k)}{1+\tan kl\tan\delta(k)}
= \tan(kl-\delta(k)) = \tan\phi(k),
\]
by the tangent subtraction formula. For Dirichlet conditions ($\beta=0$), the equality
$\mu(k)=-\cot kl=\tan(kl-\tfrac\pi2)=\tan\phi(k)$ is verified directly.

Physically, $\phi(k)=kl-\delta(k)$ is the phase accumulated by the wave on a single pass along the
resonator, $kl$, corrected by the reflection phase shift $\delta$ at the pendant vertex~$u_j$. Complete
suppression of transmission occurs when the isolated resonator, closed by a Dirichlet condition at the
attachment vertex~$v_j$, has an eigenstate at energy~$k^2$; the corresponding quantization condition,
of Bohr--Sommerfeld type, reads
\[
  \phi(k)=\tfrac\pi2+n\pi, \quad n\in\mathbb{Z},
\]
for all three boundary conditions at~$u_j$, and
gives the poles of~$\mu(k)=\tan\phi(k)$.

\begin{lem}\label{lem:phi}
	The resonator phase $\phi(k)=kl-\delta(k)$
	is continuous and satisfies $\phi(k)\to+\infty$
	as $k\to+\infty$.
	Define
	\begin{equation}\label{eq:kappa1-def}
		\kappa_1 = \inf\bigl\{k>0:\phi(k)=\tfrac\pi2\bigr\},
	\end{equation}
	with the convention $\kappa_1=0$ if no such $k$
	exists. Then $\phi'(k)>0$ for all $k>\kappa_1$,
	so that $\phi$ is strictly increasing on~$(\kappa_1,\infty)$.
\end{lem}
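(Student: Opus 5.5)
The plan is to compute $\phi'$ explicitly and split into the cases singled out in \eqref{eq:phases}. For Neumann ($\alpha=0$) and Dirichlet ($\beta=0$) conditions $\delta$ is constant, so $\phi(k)=kl$ or $\phi(k)=kl-\tfrac\pi2$. In both cases $\phi$ is continuous, tends to $+\infty$, and $\phi'\equiv l>0$ on all of $(0,\infty)$, so the claim holds whatever $\kappa_1$ is.

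For Robin conditions ($\alpha\beta\neq0$), put $c=\alpha/\beta\neq0$, so that $\delta(k)=\arctan(c/k)$. This is smooth and bounded on $(0,\infty)$ with values in $(-\tfrac\pi2,\tfrac\pi2)$. Hence $\phi$ is continuous and $\phi(k)\ge kl-\tfrac\pi2\to+\infty$. Differentiating gives
\[
\phi'(k)=l+\frac{c}{k^2+c^2}.
\]
If $c>0$, then $\phi'>0$ everywhere and we are done.

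The only real work is the case $c<0$. There $\phi'(k)=l-|c|/(k^2+c^2)$ is strictly increasing in $k$, so it changes sign at most once. More precisely, $\phi'<0$ on $(0,k_*)$ and $\phi'>0$ on $(k_*,\infty)$, where $k_*^2=\max\{0,|c|/l-c^2\}$. Also $\delta(k)\to-\tfrac\pi2$ as $k\to0^+$, so $\phi(0^+)=\tfrac\pi2$. I then split into two subcases.

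\emph{Subcase $l|c|\ge1$.} Then $k_*=0$ and $\phi'>0$ on $(0,\infty)$. Hence $\phi>\tfrac\pi2$ for all $k>0$, no solution of $\phi(k)=\tfrac\pi2$ exists, $\kappa_1=0$ by the convention, and $\phi'>0$ on $(\kappa_1,\infty)$.

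\emph{Subcase $l|c|<1$.} Then $k_*>0$, and $\phi$ strictly decreases from $\tfrac\pi2$ on $(0,k_*]$, so $\phi<\tfrac\pi2$ there. On $(k_*,\infty)$ it strictly increases to $+\infty$, so it crosses $\tfrac\pi2$ exactly once, at a point $\kappa_1>k_*$. This point is the infimum in \eqref{eq:kappa1-def}. Since $\kappa_1>k_*$, we get $\phi'>0$ on $(\kappa_1,\infty)$.

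The main obstacle is this last subcase. There one must ensure that the infimum in \eqref{eq:kappa1-def} really lies beyond the critical point $k_*$, rather than at some spurious small $k$. The monotonicity of $\phi'$ settles this, together with the boundary value $\phi(0^+)=\tfrac\pi2$ and the fact that $\phi$ stays strictly below $\tfrac\pi2$ on $(0,k_*]$.
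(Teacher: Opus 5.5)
Your proof is correct. Its case split is exactly the paper's: Neumann/Dirichlet; Robin with $\alpha\beta>0$; and Robin with $\alpha\beta<0$, divided by the sign of $\phi'(0^+)=l-1/|c|$. The difference is only in how you handle the one nontrivial subcase, $l|c|<1$.

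The paper takes $\kappa_1$ as the smallest solution of $\phi(k)=\tfrac\pi2$ and rewrites the crossing condition as $\alpha=-\beta\kappa_1\cot(\kappa_1 l)$. Substituting this into $\phi'$ gives $\kappa_1\phi'(\kappa_1)=\tfrac12\bigl(2\kappa_1 l-\sin 2\kappa_1 l\bigr)>0$. The paper then uses that $\phi'$ is increasing to get $\phi'>\phi'(\kappa_1)>0$ beyond $\kappa_1$.

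You instead locate the unique zero $k_*=\sqrt{|c|/l-c^2}$ of $\phi'$. Since $\phi(0^+)=\tfrac\pi2$ and $\phi$ decreases on $(0,k_*]$, the level $\tfrac\pi2$ cannot be reached before $k_*$. Hence $\kappa_1>k_*$, and positivity of $\phi'$ on $(\kappa_1,\infty)$ is immediate.

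What each route buys: yours is purely qualitative. It uses only the monotonicity of $\phi'$ and the boundary value $\phi(0^+)=\tfrac\pi2$, avoids the trigonometric substitution and the inequality $\theta>\sin\theta$, and pins $\kappa_1$ beyond an explicit point. The paper's computation is local at the crossing. The same substitution shows $k\phi'(k)=\tfrac12(2kl-\sin 2kl)>0$ at every $k>0$ with $\cos\phi(k)=0$, wherever that point sits relative to $k_*$. It also gives an explicit expression for $\phi'(\kappa_1)$, the quantity that later enters the constants in Property~\ref{prp:gap-asympt}.
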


\begin{proof}
	Continuity and $\phi\to+\infty$ are clear from
	the explicit formula $\phi(k)=kl-\delta(k)$ with
	$\delta(k)\to0$. For Neumann ($\alpha=0$) and Dirichlet ($\beta=0$)
	conditions, $\phi(k) = kl$ and $\phi(k) = kl - \pi/2$, and the remaining claims are straightforward.
	
	For Robin conditions ($\alpha\beta\ne0$),
	\begin{equation}\label{eq:phi-prime}
		\phi'(k) = l+\frac{\alpha\beta}{\alpha^2+\beta^2k^2},
	\end{equation}
	and two cases $\alpha\beta>0$ and $\alpha\beta<0$ should be considered separately. 

    \smallskip 
	\emph{Case~1: $\alpha\beta>0$.}
	Then $\phi'(k)\ge l>0$ for all $k\ge0$, and $\phi(0^+)=-\pi/2<\pi/2$, so $\phi(k)<\pi/2$
	initially and $\phi$ is strictly increasing for $k>0$. The value $\kappa_1>0$ is the unique solution of $\phi(k) = \pi/2$. 

    \smallskip 
	\emph{Case~2: $\alpha\beta<0$.}
	Then $\phi(0^+)=\pi/2$, $\phi'$ increases in~$k$, and the conclusions depend on $\phi'(0) = l + \beta/\alpha$. 
	
	\emph{Case~2a: $\phi'(0^+)=l-|\beta/\alpha|\ge0$.}
	Since $\phi'$ is strictly increasing and non-negative at $0^+$, $\phi'(k)>0$ for all
	$k>0$, and $\phi$ is strictly increasing there; hence $\phi(k)>\pi/2$ for all $k>0$
	and we set $\kappa_1=0$. 
	
	\emph{Case~2b: $\phi'(0^+)=l-|\beta/\alpha|<0$.}
	Then $\phi$ decreases initially below $\pi/2$; since $\phi(k)\to+\infty$, the intermediate value theorem gives a smallest $\kappa_1>0$ with $\phi(\kappa_1)=\pi/2$. Then $\cot(\kappa_1 l)=-\tan\delta(\kappa_1)$,
	i.e., $\alpha=-\beta\kappa_1\cot(\kappa_1 l)$. Substituting into~\eqref{eq:phi-prime} yields
	\[
	   \kappa_1\phi'(\kappa_1) 	= \kappa_1l - \cos\kappa_1l\sin\kappa_1l= \tfrac12\bigl(2\kappa_1 l -\sin(2\kappa_1 l)\bigr)>0,
	\]
	as $\theta>\sin\theta$ for all $\theta>0$.
	Since $\phi'$ is strictly increasing in $k$,
	$\phi'(k)>\phi'(\kappa_1)>0$ for all
	$k>\kappa_1$.
	
	In all cases, $\phi' >0$ on $(\kappa_1,+\infty)$ and thus $\phi$ is strictly increasing there.	
\end{proof}

\begin{cor}\label{cor:mu}
  The poles of $\mu$ in $[\kappa_1,+\infty)$ form an unbounded strictly increasing sequence $\kappa_1<\kappa_2<\cdots$. Each $\kappa_n$ satisfies $\phi(\kappa_n) = (2n-1)\pi/2$, $n\in\mathbb{N}$; moreover, $\mu$ is strictly increasing on each interval $(\kappa_n,\kappa_{n+1})$.
\end{cor}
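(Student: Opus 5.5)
The corollary follows from Lemma~\ref{lem:phi} together with the identity $\mu(k)=\tan\phi(k)$ established just before it. First, I will identify the poles of $\mu$ with the solutions of $\phi(k)\in\tfrac\pi2+\pi\mathbb{Z}$. Some care is needed because the formula \eqref{eq:mu} is a quotient. For Neumann and Dirichlet conditions this is immediate, since $\mu=\tan kl$ or $\mu=-\cot kl$. For Robin conditions, a pole means the denominator $\beta k+\alpha\tan kl$ vanishes while the numerator does not, or $\tan kl$ itself blows up. I will check both situations by hand: the numerator and denominator cannot vanish simultaneously, because that would force $\beta^2k^2+\alpha^2=0$. The point $\tan kl=\infty$ gives $\mu=-\beta k/\alpha$, which is finite, so it is not a pole. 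Hence the poles are exactly the points where $\cos\phi(k)=0$.

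Next, restrict to $[\kappa_1,\infty)$. By definition $\phi(\kappa_1)=\pi/2$ when $\kappa_1>0$. By Lemma~\ref{lem:phi}, $\phi$ is continuous, strictly increasing on $(\kappa_1,\infty)$, and tends to $+\infty$. So $\phi$ maps $[\kappa_1,\infty)$ bijectively onto $[\phi(\kappa_1),\infty)$. The intermediate value theorem then gives, for each $n\in\mathbb{N}$, a unique $\kappa_n$ with $\phi(\kappa_n)=(2n-1)\pi/2$. These points are strictly increasing in $n$ and unbounded, since $\phi$ is strictly increasing and finite-valued.

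One must also check that no pole in $[\kappa_1,\infty)$ corresponds to a value $\phi\le -\pi/2$, and that the indexing starts correctly. Since $\phi\ge\pi/2$ on $[\kappa_1,\infty)$, only the values $(2n-1)\pi/2$ with $n\ge1$ occur. In the degenerate Case~2a, where $\kappa_1=0$, we have $\phi(0^+)=\pi/2$ and $\phi>\pi/2$ for $k>0$. I expect this edge case to be the main bookkeeping obstacle. Here $0$ is not a genuine pole, so I would make the convention explicit: the first positive pole satisfies $\phi=3\pi/2$, or else one treats $\kappa_1=0$ formally as the index-one point. I will state whichever convention keeps the labelling $\phi(\kappa_n)=(2n-1)\pi/2$ consistent.

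Finally, for monotonicity, on $(\kappa_n,\kappa_{n+1})$ we have $\phi\in((2n-1)\pi/2,(2n+1)\pi/2)$, where $\tan$ is smooth and increasing. Thus
\[
\mu'(k)=\sec^2\phi(k)\,\phi'(k)>0,
\]
using $\phi'>0$ from Lemma~\ref{lem:phi}. So $\mu$ is strictly increasing on each such interval, running from $-\infty$ to $+\infty$.
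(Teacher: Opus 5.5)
Your argument is correct and is essentially the paper's. The paper identifies the poles of $\mu=\tan\phi$ with the points where $\phi\in\tfrac\pi2+\pi\mathbb{Z}$. It then uses Lemma~\ref{lem:phi} (continuity, strict increase and unboundedness of $\phi$ on $[\kappa_1,\infty)$) to get exactly one $\kappa_n$ for each value $(2n-1)\pi/2$, and obtains $\mu'=\phi'\sec^2\phi>0$ by the chain rule; your extra checks only spell out what the paper calls ``by construction''.

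Two small corrections. At points where $\tan kl=\infty$, the Robin $\mu$ equals $\beta k/\alpha$, not $-\beta k/\alpha$; this does not affect your argument. In Case~2a, $\mu(k)=\tan\phi(k)\to-\infty$ as $k\to0^+$, so $0$ does behave as a pole, and the convention $\kappa_1=0$ with $\phi(0^+)=\pi/2$ is the natural choice rather than a merely formal one.
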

\begin{proof}
  By construction, the poles of~$\mu=\tan\phi$ on $\Real_+$ are exactly those $k$ for which~$\phi$ takes values $(2n-1)\pi/2$. Since $\phi$ is strictly increasing on $[\kappa_1,+\infty)$ with $\phi\to+\infty$, it takes each value $(2n-1)\pi/2$, $n\in\mathbb{N}$, exactly once there, thus uniquely defining a strictly increasing
  unbounded sequence $\kappa_1<\kappa_2<\cdots$ of poles of~$\mu$. Monotonicity of $\mu$ between poles follows from $\phi'>0$ and the chain rule.
\end{proof}

The behaviour of the half-trace 
\[
    \xi(k)=A\cos kh - B\mu(k)\sin kh = A \cos\psi(k) - B \mu(k)\sin\psi(k)
\]
near each pole $\kappa_n$ of $\mu$
depends on whether $\kappa_n$ is simultaneously a zero of $\sin \psi(k)$: in the generic case $\xi$ inherits
a pole from $\mu$, while in the exceptional case the singularity is removable.

\begin{lem}%[Behaviour of $\xi$ near poles of $\mu$]
\label{lem:xi-asymptotics}
At each pole $\kappa_n$ of $\mu$, the following holds:
\begin{enumerate}
  \item[\textup{(i)}]
  if $\sin(\kappa_n h)\ne0$, then $\kappa_n$
  is a pole of $\xi$;
  \item[\textup{(ii)}]
  if $\kappa_n h=m\pi$ for some $m\in\mathbb{N}$, then
  \begin{equation}\label{eq:xi-limit}
    \xi(k)
    = (-1)^m\!\Bigl(A+\frac{Bh}{\phi'(\kappa_n)}\Bigr) + O(k-\kappa_n)
    \quad\text{as }k\to\kappa_n;
  \end{equation}
  in particular, the limiting value $\xi(\kappa_n)$ satisfies $|\xi(\kappa_n)|>1$.
\end{enumerate}
\end{lem}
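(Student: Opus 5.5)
The proof works with the representation $\mu(k)=\tan\phi(k)$ from~\eqref{eq:phases} and with the monotonicity of $\phi$ from Lemma~\ref{lem:phi}. Since $\phi(\kappa_n)=(2n-1)\pi/2$ by Corollary~\ref{cor:mu} and $\phi'(\kappa_n)>0$ (for $n=1$ the positivity at $\kappa_1$ itself comes from the computation in Case~2b, or is trivial in the other cases), we can expand near $\kappa_n$:
\[
\cos\phi(k)=-\sin\bigl(\phi(k)-\phi(\kappa_n)\bigr)(-1)^{n-1}.
\]
This gives the Laurent expansion
\[
\mu(k)=-\frac{1}{\phi'(\kappa_n)(k-\kappa_n)}+O(k-\kappa_n),\qquad k\to\kappa_n .
\]
It follows from $\tan\phi=-\cot(\phi-\phi(\kappa_n))$, using that $\phi$ is smooth (real-analytic) near $\kappa_n>0$, and, when $\kappa_1=0$ is excluded, that $\kappa_n>0$. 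Thus $\mu$ has a simple pole with residue $-1/\phi'(\kappa_n)$.

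\emph{Part (i).} Write $\xi(k)=A\cos kh-B\mu(k)\sin kh$. The first term is bounded, and $B\sin(kh)\to B\sin(\kappa_nh)\neq0$. Hence the second term has a simple pole with residue $B\sin(\kappa_n h)/\phi'(\kappa_n)\ne0$, so $\xi$ has a pole at $\kappa_n$.

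\emph{Part (ii).} If $\kappa_nh=m\pi$, expand
\[
\sin kh=(-1)^m h(k-\kappa_n)+O\bigl((k-\kappa_n)^3\bigr),\qquad \cos kh=(-1)^m+O\bigl((k-\kappa_n)^2\bigr).
\]
Multiplying the Laurent series, the pole cancels. We obtain
\[
-B\mu(k)\sin kh=(-1)^m\frac{Bh}{\phi'(\kappa_n)}+O(k-\kappa_n),
\]
and adding $A\cos kh$ gives~\eqref{eq:xi-limit}.

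\emph{The inequality $|\xi(\kappa_n)|>1$.} This follows from $A\ge1$ (AM--GM in~\eqref{eq:AB}), $B>0$, $h>0$ and $\phi'(\kappa_n)>0$, so that $|\xi(\kappa_n)|=A+Bh/\phi'(\kappa_n)>1$.

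The main obstacle is establishing $\phi'(\kappa_n)>0$ strictly at the pole itself, including the endpoint $\kappa_1$, rather than only on the open interval $(\kappa_1,\infty)$. I would settle it by invoking the explicit computation $\kappa_1\phi'(\kappa_1)=\tfrac12(2\kappa_1l-\sin 2\kappa_1 l)>0$ from Case~2b, together with $\phi'\ge l$ in the other cases. The $O(k-\kappa_n)$ remainder then requires only that $\phi$ be $C^2$ near $\kappa_n$, which holds because $\delta$ is smooth for $k>0$.
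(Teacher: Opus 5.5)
Your argument is essentially the paper's: both reduce to a first-order expansion of $\tan\phi\,\sin\psi$ at $\kappa_n$ (you through a Laurent expansion of $\mu=-\cot(\phi-\phi(\kappa_n))$, the paper by dividing $\sin\phi\sin\psi$ and $\cos\phi$ by $k-\kappa_n$ and passing to the limit), and your explicit check that $\phi'(\kappa_1)>0$ makes precise a point the paper uses only implicitly via Lemma~\ref{lem:phi}. One small correction: for Robin conditions $\phi''(\kappa_n)\ne0$, so $\mu(k)=-\frac{1}{\phi'(\kappa_n)(k-\kappa_n)}+\frac{\phi''(\kappa_n)}{2\phi'(\kappa_n)^2}+O(k-\kappa_n)$; the remainder in your Laurent expansion is therefore $O(1)$, not $O(k-\kappa_n)$, which is harmless because in part~(ii) it is multiplied by $\sin kh=O(k-\kappa_n)$.
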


\begin{proof}
Only part~\textup{(ii)} requires justification.
Writing 
\[
    \frac{\sin\psi(k)}{\cos\phi(k)} = \frac{\sin\psi(k)/(k-\kappa_n)}{\cos\phi(k)/(k-\kappa_n)},
\]
we represent $\mu(k)\sin kh = \tan \phi(k)\sin \psi(k) = \sin \phi(k) \sin \psi(k)/\cos\phi(k)$ as a continuously differentiable function in a neighbourhood of $\kappa_n$. Since
    \[
        \lim_{k\to \kappa_n} \frac{\sin\phi(k)\sin\psi(k)/(k-\kappa_n)}{\cos\phi(k)/(k-\kappa_n)} 
            = - \frac{\sin\phi(\kappa_n)\psi'(\kappa_n) \cos\psi(\kappa_n)}{\phi'(\kappa_n)\sin\phi(\kappa_n)}
            = - \frac{h(-1)^m}{\phi'(\kappa_n)},
    \]
we get, as $k\to\kappa_n$,  
    \[
        \xi(k) = A\cos\psi(k)-B\tan\phi(k)\sin\psi(k) \to (-1)^m \Bigl(A + \frac{Bh}{\phi'(\kappa_n)}\Bigr) := \xi(\kappa_n).
    \]
With this continuously differentiable representation of~$\xi$, the difference $\xi(k)-\xi(\kappa_n)$ is $O(k-\kappa_n)$, giving the stated remainder in~\eqref{eq:xi-limit}. The lower bound $|\xi(\kappa_n)|=A+Bh/\phi'(\kappa_n)>A\ge1$ is immediate.
\end{proof}

\begin{pro}[Central representation]\label{pro:central}
	Under the assumptions of Theorem~\ref{thm:TN}\,\textup{(i)},
	with $\psi(k)=kh$ and $\phi(k)$ defined by~\eqref{eq:phases},
	\begin{equation}\label{eq:TN-cF}
		T(k) = \cF\bigl(\phi(k),\psi(k)\bigr),
	\end{equation}
	where
	\begin{equation}\label{eq:cF}
		\cF(\phi,\psi)
		=\frac{\cos^2\!\phi}
		{\cos^2\!\phi
			+\bigl[(A^2-1)\cos^2\!\phi+B^2\sin^2\!\phi\bigr]
			\,U_{N-1}^2\!\bigl(A\cos\psi-B\tan\phi\sin\psi\bigr)}.
	\end{equation}
	The function~$\cF$ is $\pi$-periodic in each argument,
	takes values in $[0,1]$, and is smooth on
	$\{(\phi,\psi):\phi\ne\pi n -\tfrac\pi2,\,n\in\mathbb{Z}\}$.
	Setting $\cF(\pi n -\tfrac\pi2,\psi)=0$ for
	$\sin\psi\ne0$ extends~$\cF$ continuously to those
	lines away from the lattice points~$(\phi,\psi)\in(\pi\mathbb{Z}-\tfrac\pi2) \times \pi \mathbb{Z}$.
\end{pro}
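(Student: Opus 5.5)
The plan is to start from Theorem~\ref{thm:TN}(i), written in the compact form \eqref{TNzxi}, $T_N=1/(1+|z|^2U_{N-1}^2(\xi))$. By \eqref{eq:wz-AB} we have $|z|^2=(A^2-1)+B^2\mu^2$ and $\xi=A\cos kh-B\mu\sin kh$. Substituting $\mu=\tan\phi(k)$ and $\psi=kh$, and multiplying numerator and denominator by $\cos^2\phi$, gives
\[
T=\frac{\cos^2\phi}{\cos^2\phi+\bigl[(A^2-1)\cos^2\phi+B^2\sin^2\phi\bigr]U_{N-1}^2(A\cos\psi-B\tan\phi\sin\psi)}.
\]
This is exactly $\cF(\phi(k),\psi(k))$, valid wherever $\cos\phi(k)\ne0$, i.e.\ away from the poles of $\mu$. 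At the poles themselves, the representation is understood through the continuous extension established below.

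Next come the properties of $\cF$.

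\emph{Periodicity.} In $\phi$: $\cos^2\phi$, $\sin^2\phi$ and $\tan\phi$ are all $\pi$-periodic. In $\psi$: replacing $\psi$ by $\psi+\pi$ flips the sign of the Chebyshev argument, and $U_{N-1}^2$ is even by the parity relation $U_n(-x)=(-1)^nU_n(x)$.

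\emph{Range and smoothness.} On $\cos\phi\ne0$ the numerator is positive and the added bracket term is nonnegative, since $A\ge1$. Hence the denominator is at least the numerator and is strictly positive, so $\cF\in(0,1]$ there, and $\cF$ is smooth as a rational expression in smooth functions with nonvanishing denominator.

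The one genuine point is continuity near $\phi_0=\pi n-\tfrac\pi2$ with $\sin\psi_0\ne0$. As $(\phi,\psi)\to(\phi_0,\psi_0)$ we have $\tan\phi\sin\psi\to\pm\infty$, so the Chebyshev argument $x$ tends to infinity in absolute value. Since $U_{N-1}(x)\sim(2x)^{N-1}$, I would divide through by $\cos^2\phi$ and write
\[
\cF=\frac{1}{1+\bigl[(A^2-1)+B^2\tan^2\phi\bigr]U_{N-1}^2(x)}.
\]
For $N\ge2$, $U_{N-1}^2(x)\to\infty$; for $N=1$, $U_0=1$. In both cases $B^2\tan^2\phi\,U_{N-1}^2(x)\ge B^2\tan^2\phi\to\infty$, because $U_{N-1}^2(x)\ge1$ once $|x|\ge1$ (where $|U_n(x)|\ge n+1$). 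So $\cF\to0$, uniformly along any approach to the line, which justifies setting $\cF(\phi_0,\psi)=0$. The restriction $\sin\psi\ne0$ is used exactly here. The main subtlety I expect is keeping this bound uniform in the approach direction, which the inequality $|U_n(x)|\ge1$ for $|x|\ge1$ settles. At lattice points no limit exists in general, consistent with Lemma~\ref{lem:xi-asymptotics}(ii), so they are excluded.

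Finally, value $0$ is attained on these lines, so the range is $[0,1]$.
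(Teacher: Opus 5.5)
Your proof is correct. The substitution $\mu=\tan\phi$, the periodicity argument and the bound $0\le\cF\le1$ coincide with the paper's; the difference is in how you handle continuity across the lines $\cos\phi=0$.

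\textbf{The paper's route.} It removes the singularity algebraically. It sets $\widetilde{U}_{N-1}(\phi,\psi)=U_{N-1}(A\cos\psi-B\tan\phi\sin\psi)\cos^{N-1}\phi$, which is a trigonometric polynomial. It then rewrites $\cF=\cos^{2N}\phi\big/\bigl(\cos^{2N}\phi+[(A^2-1)\cos^2\phi+B^2\sin^2\phi]\,\widetilde{U}_{N-1}^2\bigr)$, in which every factor is continuous on $\mathbb{R}^2$. On $\phi=\pi n-\tfrac\pi2$ the denominator is evaluated explicitly as $4^{N-1}B^{2N}\sin^{2(N-1)}\psi>0$.

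\textbf{Your route.} You argue analytically instead. Near a point with $\sin\psi_0\ne0$, the Chebyshev argument eventually satisfies $|x|\ge1$, so $|U_{N-1}(x)|\ge1$. Then $\cF\le 1/(1+B^2\tan^2\phi)\to0$, and since this bound depends on $\phi$ alone, it is automatically uniform in the direction of approach.

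\textbf{What each buys.} Your argument is shorter and uses only the elementary fact $|U_n(x)|\ge n+1$ for $|x|\ge1$. The paper's cleared form costs a polynomial expansion but gives the exact leading behaviour $\cF\approx\cos^{2N}\phi/(4^{N-1}B^{2N}\sin^{2(N-1)}\psi)$. That is what is reused in Proposition~\ref{pro:zeros-ones}(i) and for the vanishing order $2N$ in Property~\ref{prp:gap-asympt}(i). Your bound only shows vanishing of order at least two in $\cos\phi$.

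A minor point: Lemma~\ref{lem:xi-asymptotics}(ii) concerns the limit of $\xi$ along the physical curve $(\phi(k),\psi(k))$, where it does exist. The failure of a two-variable limit at lattice points is what Remark~\ref{rem:corners} illustrates. Neither is needed for the statement as posed.
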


\begin{proof}
	Substituting $\mu=\tan\phi$ into~\eqref{TNzxi} and~\eqref{eq:wz-AB} gives
	\[
	T(k)
	= \frac{1}{1+\bigl[(A^2-1)+B^2\tan^2\!\phi(k)\bigr]
		U_{N-1}^2(A\cos\psi(k)-B\tan\phi(k)\sin\psi(k))};
	\]
    multiplying next the numerator and denominator by $\cos^2\phi(k)$ yields~\eqref{eq:TN-cF}--\eqref{eq:cF}.
	
	Since $\cF$ depends on $\phi$ through $\tan\phi$, $\cos^2\phi$, $\sin^2\phi$, it is $\pi$-periodic in~$\phi$. Replacing $\psi\mapsto\psi+\pi$ changes sign in both	$\cos\psi$ and $\sin\psi$, hence in the argument of~$U_{N-1}$; since $U_{N-1}^2$ is an even function, $\cF$ is unchanged.
	
	The fact that $\cF\ge 0$ is clear; $\cF \le 1$ because the denominator is at least $\cos^2\phi$: indeed, in view of $A\ge1$ and $B>0$, the bracket $(A^2-1)\cos^2\phi+B^2\sin^2\phi$ is non-negative.

    Continuity is only questionable at the poles of $\tan\phi$ and zeros of the denominator. Set
	\begin{equation}\label{eq:tildeU}
		\widetilde{U}_{N-1}(\phi,\psi)
		:= U_{N-1}(A\cos\psi-B\tan\phi\sin\psi)
		\cdot(\cos\phi)^{N-1};
	\end{equation}
	expanding $U_{N-1}$ as a polynomial of degree $N-1$
	with leading coefficient $2^{N-1}$ gives
	\begin{equation}\label{eq:tildeU-trig}
		\widetilde{U}_{N-1}(\phi,\psi)
		= \sum_{j=0}^{N-1}a_j (A\cos\phi\cos\psi-B\sin\phi\sin\psi)^j (\cos \phi)^{N-1-j},
	\end{equation}
	a trigonometric polynomial in $(\phi,\psi)$, which is uniformly bounded and uniformly continuous
	on $\mathbb{R}^2$. Substituting
	$\widetilde{U}_{N-1}^2=(\cos\phi)^{2(N-1)}\cdot
	U_{N-1}^2(\xi)$ into~\eqref{eq:cF} yields
	\begin{equation}\label{eq:cF-tildeU}
		\cF(\phi,\psi)
		= \frac{\cos^{2N}\!\phi}
		{\cos^{2N}\!\phi
			+\bigl[(A^2-1)\cos^2\!\phi+B^2\sin^2\!\phi\bigr]
			\widetilde{U}_{N-1}^2(\phi,\psi)},
	\end{equation}
	in which every factor is continuous on $\mathbb{R}^2$.
	At $\phi=\pi n - \tfrac\pi2$, the denominator
	equals $B^2\widetilde{U}_{N-1}^2(\pi n - \tfrac\pi2,\psi)
	=4^{N-1}B^{2N}(\sin\psi)^{2(N-1)}$,
	which is strictly positive for $\sin\psi\ne0$.
	Hence the ratio~\eqref{eq:cF-tildeU} extends
	continuously with value~$0$ at all such points.
\end{proof}

 \begin{rem} Denote by
  \begin{equation}\label{eq:Lambda}
    \Lambda
    = \Bigl(\pi\mathbb{Z}-\tfrac\pi2\Bigr)
       \times \pi\mathbb{Z}
    \subset\mathbb{R}^2
  \end{equation}
  the lattice of points at which $\cos\phi=0$ and
  $\sin\psi=0$ simultaneously. By Proposition~\ref{pro:central}, the function~$\cF$
  is continuous on $\mathbb{R}^2\setminus\Lambda$; as follows from Remark~\ref{rem:corners} below, $\cF$ is discontinuous at every point of~$\Lambda$ since the limit therein depends on the direction.
  \end{rem}

\begin{pro}[Zeros and ones of $\cF$]
	\label{pro:zeros-ones}
	Let $(\phi,\psi)\notin \Lambda$.
	\begin{enumerate}
		\item[\textup{(i)}]
		$\cF(\phi,\psi)=0$ if and only if $\phi=\pi n -\tfrac\pi2$ for some $n\in\mathbb{Z}$ .
		\item[\textup{(ii)}]
		$\cF(\phi,\psi)=1$ if 
		\begin{equation}\label{eq:cF=1}
			U_{N-1}(A\cos\psi-B\tan\phi\sin\psi)=0.
		\end{equation}
        Extra solutions of $\cF(\phi,\psi)=1$ are only possible when $A=1$ (i.e., when $\theta_1=\theta_2$), in which case they coincide with $\pi \mathbb{Z} \times \Real$.
	\end{enumerate}
\end{pro}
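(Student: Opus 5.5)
I would work directly from the two representations of $\cF$ in Proposition~\ref{pro:central}: the form~\eqref{eq:cF} away from the lines $\cos\phi=0$, and the continuous form~\eqref{eq:cF-tildeU} on them. Throughout I set $D(\phi,\psi)=\bigl[(A^2-1)\cos^2\phi+B^2\sin^2\phi\bigr]\,U_{N-1}^2(\xi)$ with $\xi=A\cos\psi-B\tan\phi\sin\psi$, so that $\cF=\cos^2\phi/(\cos^2\phi+D)$ wherever $\cos\phi\ne0$.

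For part~(i), I first take $\cos\phi\neq0$. Then the numerator $\cos^2\phi>0$ and the denominator is finite, since $U_{N-1}$ is a polynomial evaluated at a finite argument, so $\cF>0$. Next I take $\phi=\pi n-\tfrac\pi2$ with $(\phi,\psi)\notin\Lambda$. Then $\sin\psi\ne0$, and by the extension in Proposition~\ref{pro:central}, obtained from~\eqref{eq:cF-tildeU} with denominator $4^{N-1}B^{2N}\sin^{2(N-1)}\psi>0$, we get $\cF=0$. This gives the equivalence.

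For part~(ii), $\cos\phi\ne0$ automatically, since otherwise $\cF=0$ by~(i). Hence $\cF=1$ holds if and only if $D=0$. One factor of $D$ vanishing is exactly~\eqref{eq:cF=1}, which gives the sufficiency. For the extra solutions, I would use the bracket factor: it is a sum of the two nonnegative terms $(A^2-1)\cos^2\phi$ and $B^2\sin^2\phi$. Because $B>0$, its vanishing forces $\sin\phi=0$, and then, because $\cos^2\phi=1$, it also forces $A^2=1$, that is $A=1$, which is $\theta_1=\theta_2$. Conversely, if $A=1$ and $\phi\in\pi\mathbb{Z}$, the bracket vanishes for every $\psi$. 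So the extra solutions are exactly $\pi\mathbb{Z}\times\Real$.

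The argument is essentially elementary. The only delicate point is making sure the lines $\cos\phi=0$ are treated through the continuous extension rather than through formula~\eqref{eq:cF}. That is why the hypothesis $(\phi,\psi)\notin\Lambda$ enters: it guarantees $\sin\psi\ne0$ there, so the extended value is well defined.
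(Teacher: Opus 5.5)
Your proposal is correct and follows essentially the same route as the paper. Part (i) uses the continuous representation \eqref{eq:cF-tildeU}, whose denominator equals $4^{N-1}B^{2N}(\sin\psi)^{2(N-1)}>0$ on the lines $\cos\phi=0$ off $\Lambda$; part (ii) reduces $\cF=1$ to the vanishing of $\bigl[(A^2-1)\cos^2\phi+B^2\sin^2\phi\bigr]U_{N-1}^2$, with the bracket vanishing only when $A=1$ and $\sin\phi=0$ (the only cosmetic difference is that you check $\cF>0$ for $\cos\phi\neq0$ directly from \eqref{eq:cF} rather than from \eqref{eq:cF-tildeU}).
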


\begin{proof}
	\emph{Part~(i).}
	The numerator of~\eqref{eq:cF-tildeU} vanishes if and only if
	$\cos\phi=0$, i.e., if and only if $\phi=\pi n-\tfrac\pi2$.
	Outside the excluded lattice points, the denominator of~\eqref{eq:cF-tildeU}
	equals $B^2\widetilde{U}_{N-1}^2(\pi/2,\psi)
	=4^{N-1}B^{2N}(\sin\psi)^{2(N-1)}>0$, so the ratio is indeed zero.
	
	\emph{Part~(ii).}
	Since $\phi\ne\pi n - \tfrac\pi2$, we have $\cos\phi\ne0$, and thus $\cF(\phi,\psi)=1$ if and only if
    \[
        \bigl[(A^2-1)\cos^2\!\phi+B^2\sin^2\!\phi\bigr]
			{U}_{N-1}^2(A\cos\psi-B\tan\phi\sin\psi) = 0.
    \]
    The expression in the square bracket is always positive if $A>1$; for $A=1$, it vanishes if $\sin\phi = 0$ regardless of $\psi$. 
\end{proof}

\begin{rem}\label{rem:corners}\rm
	At the lattice points $(\phi_0,\psi_0)\in \Lambda$,
	the limiting value of $\cF$ depends on the direction of approach. For example, with $N=2$ and $A=B=1$, the path $\phi=\tfrac{\pi}2-t$, $\psi=t$ satisfies $\widetilde{U}_{1}(\tfrac{\pi}2-t,t) \equiv 0$ and $\cF(\tfrac{\pi}2-t,t)\equiv 1$ for $t\in(0,\tfrac\pi2)$, while $\cF(\pi/2,t) \equiv 0$ for such $t$, resulting in different limits as $t\to0^+$.
	
	However, along the physical path $(\phi(k),\psi(k))$ through $\Lambda$, the limiting value of~$\cF$ is always zero. Indeed, assume that $(\phi(k_0),\psi(k_0))\in \Lambda$, i.e., that $k_0 = \kappa_n$ and $\kappa_nh = \pi m$ for some $n,m \in \mathbb{N}$. By Lemma~\ref{lem:xi-asymptotics}(ii), 
    the limiting value $\xi(\kappa_n)=\lim_{k\to\kappa_n}\xi(k)$ is outside~$(-1,1)$; thus $U_{N-1}(\xi(\kappa_n))\ne0$, and the denominator in~\eqref{eq:cF} tends to the nonzero value $B^2U^2_{N-1}(\xi(\kappa_n))$. This yields $T(\kappa_n) = \lim_{k\to\kappa_n}\cF(\phi(k),\psi(k)) = 0$, consistently with
	Proposition~\ref{pro:zeros-ones}(i) and Theorem~\ref{thm:high-energy}(i) below.
\end{rem}

%% ---  ---  ---  ---  ---  ---  ---  ---  ---  ---  ---  ---  ---  ---  ---  --- --
\subsection{Large-energy behaviour of~$T$}
\label{ssec:large-energy}
%% ---  ---  ---  ---  ---  ---  ---  ---  ---  ---  ---  ---  ---  ---  ---  --- --

We now derive the main analytic consequences of the
representation $T=\cF(\phi,\psi)$
established in Proposition~\ref{pro:central}.

%%  ---  Periodicity  --- 

\begin{prp}[Periodicity and quasi-periodicity]
	\label{prp:periodicity}\normalfont
	For Neumann or Dirichlet conditions at pendant vertices, the transmission probability~$T$ is	\begin{itemize}
		\item[(i)] periodic in~$k$ if $l/h\in\mathbb{Q}$;
		\item[(ii)] two-frequency quasi-periodic in~$k$ if $l/h\notin\mathbb{Q}$.
	\end{itemize}
\end{prp}

\begin{proof}
    For Neumann or Dirichlet conditions, $\delta$ is constant and, by~\eqref{eq:TN-cF} and~\eqref{eq:phases},  
	\begin{equation}\label{eq:TN-two-phases}
		T(k) = \cF(kl-\delta,\,kh)
	\end{equation}
	is a function of $k$ through the two phases $kh$ and $kl$ alone. 
    Since $\cF$ is $\pi$-periodic in each argument by Proposition~\ref{pro:central}, $T(k+P)=T(k)$ for all $k$ if and only if $Pl\in\pi\mathbb{Z}$ and $Ph\in\pi\mathbb{Z}$ simultaneously, which holds for some
	$P>0$ if and only if $l/h\in\mathbb{Q}$.
	
	When $l/h\notin\mathbb{Q}$, the pair $(kh\bmod\pi,\,kl\bmod\pi)$ traces a dense orbit in $[0,\pi)^2$ by Weyl's equidistribution theorem, so $T$ is quasi-periodic but not periodic.
\end{proof}

%%  ---  High-energy resonances  --- 
We now establish the persistence of anti-resonances
and transmission resonances at arbitrarily high energies.

\begin{thm}%[Transmission resonances and anti-resonances]
	\label{thm:high-energy}\normalfont
	Under the standing assumptions about $\theta_j$ and type-I vertex conditions, the following holds true.
	\begin{enumerate}
		\item[\textup{(i)}]
		Every pole $\kappa_n$ of $\mu$ is an \emph{anti-resonance}: $T(\kappa_n)=0$. There are
		infinitely many such poles, accumulating at $+\infty$.
		\item[\textup{(ii)}] Set $k_n = \pi n/h$; then for each $n$ with $k_n>\kappa_1$,
		the interval $I_n=(k_n,k_{n+1})$ contains at least $N-1$ \emph{transmission resonances}, i.e.,
		values $k$ with $T(k)=1$.
	\end{enumerate}
	Similar statements hold for type-II vertex conditions.
\end{thm}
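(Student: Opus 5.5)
Part (i) follows from the central representation, Proposition~\ref{pro:central}. By Corollary~\ref{cor:mu}, the poles $\kappa_n$ of $\mu$ form an unbounded increasing sequence with $\phi(\kappa_n)=(2n-1)\pi/2$, so there are infinitely many of them, accumulating at $+\infty$. At each $\kappa_n$ we distinguish the two cases of Lemma~\ref{lem:xi-asymptotics}.

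If $\sin(\kappa_n h)\ne0$, then $(\phi(\kappa_n),\psi(\kappa_n))\notin\Lambda$. By Proposition~\ref{pro:central}, $\cF$ extends continuously with value $0$ there, so $T(k)=\cF(\phi(k),\psi(k))\to0$ as $k\to\kappa_n$. Equivalently, Proposition~\ref{pro:zeros-ones}(i) applies.

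If $\kappa_n h\in\pi\mathbb{Z}$, the path passes through a lattice point of $\Lambda$. Here we use Remark~\ref{rem:corners}: by Lemma~\ref{lem:xi-asymptotics}(ii), $\xi(k)$ tends to a limit $\xi(\kappa_n)$ with $|\xi(\kappa_n)|>1$. Hence $U_{N-1}(\xi(\kappa_n))\ne0$, because all zeros of $U_{N-1}$ lie in $(-1,1)$. In \eqref{eq:cF} the numerator $\cos^2\phi(k)\to0$, while the denominator tends to $B^2U_{N-1}^2(\xi(\kappa_n))>0$. So $T(\kappa_n)=0$, with $T$ defined by continuity.

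Part (ii) uses an intermediate value argument for $\xi$ on $I_n$. The zeros of $U_{N-1}$ are $\cos(j\pi/N)$, $j=1,\dots,N-1$, all inside $(-1,1)$, and by Proposition~\ref{pro:zeros-ones}(ii) any $k$ with $\cos\phi(k)\ne0$ and $\xi(k)=\cos(j\pi/N)$ gives $T(k)=1$. At the endpoints $k_n,k_{n+1}$ we have $\sin kh=0$. If $\mu$ is finite there, then $\xi(k_n)=(-1)^n A$ and $\xi(k_{n+1})=(-1)^{n+1}A$, so $|\xi|\ge1$ at both ends with opposite signs. If an endpoint coincides with a pole $\kappa_m$, Lemma~\ref{lem:xi-asymptotics}(ii) gives the limit $(-1)^{n}(A+Bh/\phi'(\kappa_m))$, which has the same sign and still satisfies $|\xi|>1$.

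The obstacle is the interior poles of $\mu$ in $I_n$. Since $k_n>\kappa_1$, Lemma~\ref{lem:phi} applies on $I_n$: $\phi$ is strictly increasing there and $\mu$ is increasing between consecutive poles. At an interior pole $\kappa_m$ we have $\sin kh\ne0$, and $\xi=A\cos kh-B\mu\sin kh$ jumps between $\pm\infty$. Suppose $\sin kh>0$ on $I_n$, i.e.\ $n$ even. Then $\xi\to-\infty$ as $k\to\kappa_m^-$ and $\xi\to+\infty$ as $k\to\kappa_m^+$.

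I would split $I_n$ by its interior poles $\kappa_{m_1}<\dots<\kappa_{m_p}$ into subintervals $J_0,\dots,J_p$ on which $\xi$ is continuous. On $J_0$, $\xi$ runs from $A\ge1$ to $-\infty$. On each middle $J_i$, it runs from $+\infty$ to $-\infty$. On $J_p$, it runs from $+\infty$ to $-A\le-1$. The case $n$ odd is symmetric with signs flipped. On every subinterval $\xi$ crosses all of $[-1,1]$, so it takes each value $\cos(j\pi/N)$ at least once. This gives at least $N-1$ (indeed at least $(p+1)(N-1)$) distinct points with $U_{N-1}(\xi(k))=0$ and $\cos\phi(k)\ne0$, hence $T(k)=1$.

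The care needed is in the degenerate case $A=1$, where $\xi(k_n)=\pm1$ exactly at an endpoint. Since the zeros $\cos(j\pi/N)$ lie strictly inside $(-1,1)$, the crossing still happens in the open interval. Finally, the type-II statements follow by the substitution $\mu\mapsto-\mu^{-1}$ and the duality of Property~\ref{prp:duality}, which swaps the roles of the zeros and poles of $\mu$.
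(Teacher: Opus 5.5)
Your proposal is correct and follows the paper's proof essentially step for step. Part (i) goes through Corollary~\ref{cor:mu}, Proposition~\ref{pro:zeros-ones}(i) and Remark~\ref{rem:corners}. Part (ii) goes through the endpoint bound $(-1)^n\xi(k_n)\ge A\ge 1$ from Lemma~\ref{lem:xi-asymptotics} and the intermediate value theorem for $\xi$. The only difference is that the paper applies the intermediate value theorem just on $(k_n,\kappa)$, up to the first interior pole. You run it on every subinterval between consecutive poles, which gives the slightly stronger count $(p+1)(N-1)$.
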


\begin{proof}
	\emph{Part~(i).}
    By Corollary~\ref{cor:mu}, $\phi(\kappa_n) ={(2n-1)\pi}/{2}$ and the poles form an unbounded sequence. We have
    $T(\kappa_n)=0$ by Proposition~\ref{pro:zeros-ones}(i) and Remark~\ref{rem:corners}, which establishes part (i).
	
	\emph{Part~(ii).} By Proposition~\ref{pro:zeros-ones}(ii), $T(k)=1$ whenever $U_{N-1}(\xi(k))=0$, i.e., whenever $\xi(k) = A\cos\psi(k)- B\tan\phi(k)\sin\psi(k)$ equals one of the $N-1$ values $\cos(m\pi/N)\in(-1,1)$, $m=1,\dots,N-1$. It suffices to show that $\xi$ takes every value
	in $(-1,1)$ on $I_n$.

    At $k_n = \pi n/h$, we have $\sin\psi(k_n)=0$.
    If $k_n$ is not a pole of $\mu$, then $\xi(k_n) = (-1)^n A$.
    If $k_n$ is a pole of $\mu$ (giving a lattice point $(\phi(k_n),\psi(k_n))\in \Lambda$), then by Lemma~\ref{lem:xi-asymptotics}(ii), 
    \[
    \xi(k_n) = (-1)^n\!\Bigl(A+\frac{Bh}{\phi'(k_n)}\Bigr).
    \]
    In both cases $(-1)^n\xi(k_n) \ge A \ge 1$, so the
    endpoint values alternate in sign and have absolute value at least~$1$.

    To analyse the intermediate values, take $n$ even for the sake of definiteness, so $\xi(k_n)\ge1$ and $\sin\psi>0$ on~$I_n$.
	Let $\kappa$ be the first pole of $\mu$ in
	$I_n$ if one exists, and set $\kappa=k_{n+1}$ otherwise. On $(k_n,\kappa)$, $\phi$ is
	continuous and strictly increasing by Lemma~\ref{lem:phi}, so $\xi$ is continuous
	there. If $\kappa<k_{n+1}$ is a pole of $\mu$, then $\mu(k) = \tan\phi(k)\to+\infty$ as $k\to\kappa^-$, so	$\xi(k)\to-\infty$. If $\kappa=k_{n+1}$, then $\xi(k_{n+1})\le -1$ by the endpoints estimate above. In both cases, the intermediate value
	theorem shows that $\xi$ takes every value in $(-1,1)$ on $(k_n,\kappa)$.
	The odd-$n$ case is treated in the same way.
\end{proof}

%%  ---  Gap-width  --- 

The persistence of anti-resonances, i.e., closed frequencies, as established in Theorem~\ref{thm:high-energy}, raises the question of how strongly $T(k)$ is damped near each such frequency. Figure \ref{Fig5} illustrates two qualitatively different types of zeros of the transmission probability. For a single resonator ($N=1$), all zeros are quadratic, and the graph of $T_1(k)$ exhibits the familiar parabolic profile in their neighbourhoods. For two resonators, however, the zeros at $k = \pi/3$ and $k = \pi$ exhibit markedly different local behaviour. Near $k=\pi$, the transmission probability still vanishes quadratically. However, near $k = \pi/3$, the graph becomes substantially flatter, indicating a higher-order zero. This difference reflects the presence or absence of a collective interference effect generated by the resonator array, as explained in Property~\ref{prp:gap-asympt}. 

\begin{figure}[!h]
	\centering
	\includegraphics[scale=0.28]{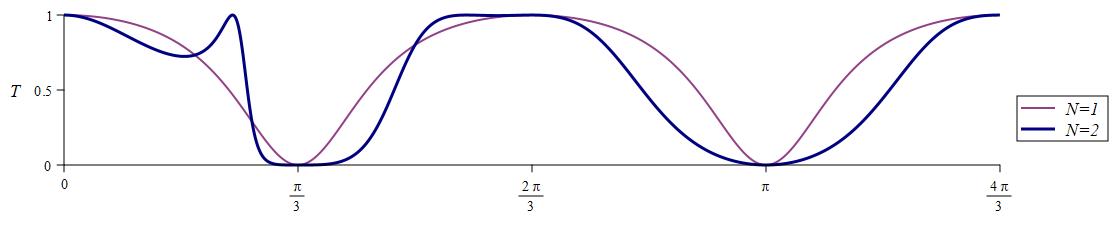}
	\caption{Transmission probability for one and two resonators. All transmission zeros are quadratic for $N=1$. For $N=2$, $k=\pi/3$ is a zero of $T_2$ of generic order $2N=4$, while $k=\pi$ is a zero of $T_2$ of order~$2$ (degenerate case). Parameters: $\theta_1=\theta_2=\theta_3=1$, 
		$\alpha=0$, $\beta=1$, $l=1.5$, and $h=1$.}
	\label{Fig5}
\end{figure}

\begin{prp}[Asymptotics near anti-resonances]\label{prp:gap-asympt}
	At every pole $\kappa>0$ of $\mu$, i.e., at every zero of $T_N$, the following holds.
	\begin{itemize}
		\item[\textup{(i)}] \emph{Generic case:} If $\sin(\kappa h)\ne0$, then
		\begin{equation}\label{eq:gap-generic}
			T_N(k)
			= \frac{[\phi'(\kappa)]^{2N}}
			{4^{N-1}B^{2N}(\sin \kappa h)^{2(N-1)}}
			(k-\kappa)^{2N}
			\bigl(1+o(1)\bigr)
			\quad\text{as }k\to\kappa.
		\end{equation}
		\item[\textup{(ii)}] \emph{Degenerate case:} 
		If $\sin(\kappa h)=0$, then
		\begin{equation}\label{eq:gap-corner}
			T_N(k)
			= \frac{[\phi'(\kappa)]^{2}}
			{B^2 U^2_{N-1}\bigl(A + Bh/\phi'(\kappa)\bigr)}
			(k-\kappa)^{2}
			\bigl(1+o(1)\bigr)
			\quad\text{as }k\to\kappa.
		\end{equation}
	\end{itemize}
	In the case~\textup{(i)}, the vanishing order $2N$
	reflects the collective action of $N$ resonators;
	in the case~\textup{(ii)}, the order drops to $2$
	independently of $N$.
\end{prp}

\begin{proof}
\emph{Case~(i): $\sin(\kappa h)\ne0$.} 
By representation~\eqref{eq:cF-tildeU}, $T_N(k)=\cF(\phi(k),\psi(k))
=\cos^{2N}\!\phi(k)/D(k)$, where
\[
  D(k) = \cos^{2N}\!\phi(k) + \bigl[(A^2-1)\cos^2\!\phi(k) + B^2\sin^2\!\phi(k)\bigr]\,
         \widetilde{U}_{N-1}^2(\phi(k),\psi(k)).
\]
Since $\cos\phi(\kappa) = 0$ and $\phi'(\kappa)>0$ by Lemma~\ref{lem:phi}, we get
\begin{equation}\label{eq:num-expand}
  \cos^{2N}\!\phi(k)
  = [\phi'(\kappa)]^{2N}(k-\kappa)^{2N}\bigl(1+o(1)\bigr).
\end{equation}
By~\eqref{eq:tildeU-trig}, $\widetilde{U}_{N-1}(\phi(\kappa),\psi(\kappa))
= 2^{N-1}(-B\sin\kappa h)^{N-1}\ne0$, whence
$$
    D(k)\to B^2\widetilde{U}_{N-1}^2(\phi(\kappa),
        \psi(\kappa))=4^{N-1}B^{2N}(\sin\kappa h)^{2(N-1)}>0,
$$
and~\eqref{eq:gap-generic} follows.

\smallskip
\emph{Case~(ii): $\sin(\kappa h)=0$.}
Now $(\phi(\kappa),\psi(\kappa))\in\Lambda$, and in view of~\eqref{eq:xi-limit}, $|\xi(k)| \to |\xi(\kappa)| = A + Bh/\phi'(\kappa) > 1$ as $k\to\kappa$. Next, since $U_{N-1}^2$ is an even function,
$U_{N-1}^2(\xi(\kappa))
=U_{N-1}^2\bigl(A+Bh/\phi'(\kappa)\bigr)$. Since $\sin^2\!\phi(\kappa)=1$, the denominator of~$\cF$ in~\eqref{eq:cF} has a non-zero limit 
\[
    B^2 U^2_{N-1}\bigl(A + Bh/\phi'(\kappa)\bigr) 
\]
as $k\to\kappa$, thus resulting in~\eqref{eq:gap-corner}.
\end{proof}

By Property~\ref{prp:gap-asympt}, the local behaviour of $T_N$ near its zeros is determined not
only by the vanishing order but also by the coefficients in~\eqref{eq:gap-generic} and~\eqref{eq:gap-corner}.
In the degenerate case~\eqref{eq:gap-corner}, the coefficient contains $U_{N-1}^2(A+Bh/\phi'(\kappa))$
in the denominator. Since $A+Bh/\phi'(\kappa)>1$, the Chebyshev polynomial $U_{N-1}$ grows rapidly
with $N$ at that point, causing the coefficient of $(k-\kappa)^2$ to decrease and the suppression zone of small $T_N$ to widen substantially as $N$ increases. In the generic case and for large~$N$, the suppression zone is determined by the condition $|k-\kappa|\phi'(\kappa)<2B|\sin(\kappa h)|$; its width may be smaller than for a degenerate quadratic zero. 

This effect is illustrated in Fig.~\ref{Fig6} for $N=8$: the generic zero at $k=\pi/3$ of order~$2N=16$ has asymptotic suppression half-width $[2B\sin(\pi/3)]/\phi'(\kappa)=1/\sqrt{3}\approx 0.58$, while at the degenerate zero at $k=\pi$, the quadratic asymptotics $T_N(k) \approx C (k-\kappa)^2$ with $C =  [\phi'(\pi)]^2/[BU_7(A+Bh/\phi'(\pi))]^2=3^2/U^2_7(4/3)\approx 10^{-4}$ makes the suppression zone much wider.

\begin{figure}[b]
	\centering
	\includegraphics[scale=0.28]{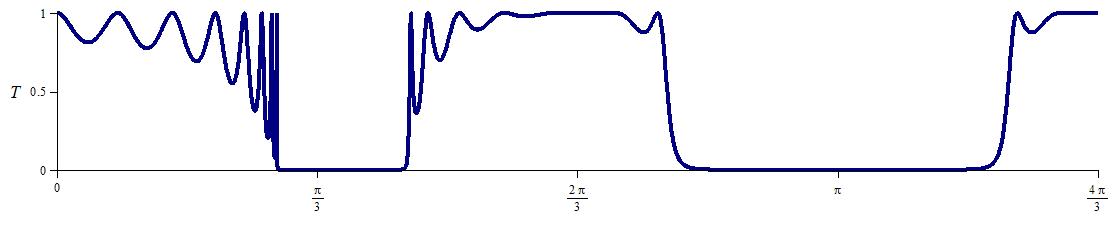}
	\caption{Transmission probability for $N=8$. Although the zero at $k=\pi/3$ has order $16$, the corresponding suppression zone is noticeably narrower than that for the zero $k=\pi$ of $T_8$ of degenerate order~$2$. Parameters: $\theta_1=\theta_2=\theta_3=1$, 
		$\alpha=0$, $\beta=1$, $l=1.5$, and $h=1$.}
	\label{Fig6}
\end{figure}

This raises a natural question: are the suppression
zones near anti-resonances uniformly wide? That is,
given $\varepsilon>0$, does there exist $\delta>0$
such that $T_N(k)<\varepsilon$ throughout a
$\delta$-neighbourhood of every anti-resonance?

For Neumann or Dirichlet conditions with $l/h\in\mathbb{Q}$, the answer is affirmative: by Property~\ref{prp:periodicity}, $T_N$ is periodic, so it suffices to bound below the width of the finitely
many gaps in one period. We show that this fails in the two remaining configurations --- incommensurate lengths $l$ and $h$, or Robin conditions --- using a two-resonator example with $\theta_1=\theta_2=1$, $\theta_3=\sqrt2$,
so that $A=B=1$ and $U_1(\xi)=2\xi$.

\smallskip
\emph{Incommensurate lengths, Neumann conditions.}
Here $\mu(k)=\tan kl$ and
\[
  \xi(k)=\cos kh-\tan(kl)\sin kh
        =\frac{\cos(k(h+l))}{\cos kl},
\]
so by Proposition~\ref{pro:central},
\[
  T_2(k)=\frac{\cos^4 kl}
              {\cos^4 kl+4\sin^2(kl)\cos^2(k(h+l))}.
\]
The anti-resonances are $\kappa_n=(2n-1)\pi/(2l)$
and the transmission resonances are
$k^*_j=(2j-1)\pi/(2(h+l))$, with
\[
  |k^*_j-\kappa_n|
  =\frac{\pi\,|2l(j-n)-h(2n-1)|}{2l(h+l)}.
\]
By Dirichlet's approximation theorem, the
irrationality of $l/(h+l)$ yields integers $j,n$
with $|l(2j+1)-(h+l)(2n-1)|$ arbitrarily small, so
$\inf_{j,n}|k^*_j-\kappa_n|=0$ and no fixed $\delta$
separates anti-resonances from transmission resonances.

\smallskip
\emph{Commensurate lengths, Robin conditions.}
Take $h=2l$ and Robin conditions with
$\alpha,\beta>0$, so that
$\phi(k)=kl-\delta(k)$ with
$\delta(k)=\arctan(\alpha/\beta k)>0$. Then
\[
  \xi(k)=\cos(2kl)-\tan\phi(k)\sin(2kl)
        =\frac{\cos(3kl-\delta(k))}{\cos(kl-\delta(k))}.
\]
The anti-resonances satisfy
$\kappa_n l-\delta(\kappa_n)=(2n-1)\pi/2$, and
the nearest transmission resonance $k^*_n$ satisfies
$3k^*_nl-\delta(k^*_n)=3(2n-1)\pi/2$. Subtracting
three times the first relation from the second,
\[
  k^*_n-\kappa_n
  =\frac{\delta(k^*_n)-3\delta(\kappa_n)}{3l}
  =O(\kappa_n^{-1}),
\]
since $\delta(k)=O(k^{-1})$. Thus the transmission
resonances approach the anti-resonances at rate
$O(1/n)$, and again no fixed $\delta$ suffices.

The Robin and Neumann transmission probabilities differ only through the phase shift $\delta(k)=\arctan(\alpha/\beta k)=O(k^{-1})$, which is negligible at high energy. One might therefore expect $T^{\mathrm{Rob}}$ and $T^{\mathrm{Neu}}$ to be asymptotically close. This is almost true, but fails on a shrinking exceptional set. Near each Neumann anti-resonance $\kappa_n=(2n-1)\pi/(2l)$, the Robin transmission
may sweep from an anti-resonance $T^{\mathrm{Rob}}=0$ to a transmission resonance $T^{\mathrm{Rob}}=1$
over an interval of width $O(1/n)$, as in the example above with $h=2l$. The two functions are
thus far apart on these intervals, however large~$k$ is. Outside such neighbourhoods, however, the
phase shift $\delta(k)$ produces only an $O(\delta(k))=O(1/k)$ change in $\cF$, and the two
transmissions converge uniformly.

\begin{prp}[Robin--Neumann proximity]\label{prp:Robin-Neumann}
Let $\kappa_n=(2n-1)\pi/(2l)$ be the Neumann
anti-resonances. Then for every $\rho>0$,
\[
  \sup_{\substack{k\ge K\\ |k-\kappa_n|\ge\rho\ \forall n}}
  \bigl|T^{\mathrm{Rob}}(k)-T^{\mathrm{Neu}}(k)\bigr|
  \longrightarrow0
  \qquad\text{as }K\to\infty.
\]
\end{prp}

\begin{proof}
By Proposition~\ref{pro:central}, $T^{\mathrm{Neu}}(k)=\cF(\phi^{\mathrm{Neu}}(k),kh)$
and $T^{\mathrm{Rob}}(k)=\cF(\phi^{\mathrm{Rob}}(k),kh)$,
where $\phi^{\mathrm{Neu}}(k)=kl$,
$\phi^{\mathrm{Rob}}(k)=kl-\delta(k)$, and
\[
  \phi^{\mathrm{Rob}}(k)-\phi^{\mathrm{Neu}}(k) =\delta(k)=O(k^{-1}).
\]
Fix $\rho>0$ and suppose $|k-\kappa_n|\ge\rho$ for all $n$. With $\phi^{\mathrm{Neu}}(\kappa_n)
=(2n-1)\pi/2$, the phase $\phi^{\mathrm{Neu}}(k)$ is at distance at least $l\rho$ from $\pi\mathbb{Z}-\tfrac\pi2$; since $\delta(k)\to0$, the same holds for $\phi^{\mathrm{Rob}}(k)$ with $\tfrac12 l\rho$
once $K$ is large enough. Hence both phases belong to the set
\[
  \Omega_\rho=\bigl\{(\phi,\psi):
  \operatorname{dist}(\phi,\pi\mathbb{Z} - \tfrac\pi2)
  \ge\tfrac12 l\rho\bigr\}.
\]
Recall that $\cF$ is $\pi$-periodic in both variables and is continuous on the set $[-\eta,\eta] \times \Real$ for every $\eta \in (0,\pi/2)$; in particular, $\cF$ is uniformly continuous on $\Omega_\rho$. Therefore, for every $\varepsilon >0$ there is $\delta>0$ such that, whenever $\phi_1 - \phi_2 < \delta$ and $(\phi_j, 0) \in \Omega_\rho$, 
\[
    |\cF(\phi_1,\psi) - \cF(\phi_2,\psi)|<\epsilon.
\]
Choosing $K$ large enough so that $|\delta(k)|<\delta$ when $k>K$ results in 
\[
  \bigl|T^{\mathrm{Rob}}(k)-T^{\mathrm{Neu}}(k)\bigr| 
  =  \bigl|\cF(\phi^{\mathrm{Rob}}(k),\psi(k))-\cF(\phi^{\mathrm{Neu}}(k),\psi(k))\bigr|    
  <\varepsilon
\]
whenever $|k-\kappa_n|\ge\rho$ for every $n\in\mathbb{N}$. The proof is complete.
\end{proof}

\begin{rem}\rm
The exclusion of the neighbourhoods $\{|k-\kappa_n|<\rho\}$ is essential: the convergence does not hold on all of $[K,\infty)$. For $h=2l$ the above construction produces transmission resonances $k^*_n\to\infty$ with
$T_2^{\mathrm{Rob}}(k^*_n)=1$, while $k^*_n$ lies within $O(1/n)$ of the Neumann anti-resonance
$\kappa_n$, where $T_2^{\mathrm{Neu}}$ vanishes
quadratically, so $T_2^{\mathrm{Neu}}(k^*_n)=O(1/n^2)$.
Hence $|T_2^{\mathrm{Rob}}(k^*_n)-T_2^{\mathrm{Neu}}(k^*_n)|
\to1$ and 
\[
  \limsup_{k\to\infty}
  \bigl|T_2^{\mathrm{Rob}}(k)-T_2^{\mathrm{Neu}}(k)\bigr|=1.
\]
Therefore, the Robin and Neumann transmission probabilities approach each other as $k\to\infty$ uniformly away from the anti-resonances, but the difference does not tend to zero on a full neighbourhood of infinity.
\end{rem}

%% ==========================================
\section{Large-$N$ limit and analysis of parameter dependence}
\label{sec:asymptotics}
%% ==========================================

In this section, we illustrate the sensitivity of $T_N(k)$ to the model parameters through several representative studies, isolating the effects of (i)~the number of resonators~$N$, (ii)~the coupling strength~$\theta_3$, (iii)~the dipole interaction ratio~$\theta_2/\theta_1$, and (iv) the resonator length and overall filter design. Throughout, we take type-I vertex conditions and Neumann boundary conditions at the vertices~$u_j$.

%% ==========================================
\subsection{Infinite-array limit and band structure}
\label{ssec:Ntoinfty}
%% ==========================================

We first recall the band--gap structure of the fully periodic quantum graph $H_\infty$, and then show how it governs the regions of high and low transmission of~$T_N$ as $N\to\infty$.

\smallskip
\noindent\textit{The periodic array.}
Let $G_\infty$ be the infinite periodic graph obtained by attaching identical resonators of length~$l$ at every
integer multiple of~$h$ on the real line, with the same vertex conditions as in Section~\ref{sec:local-periodic}. By the Bloch--Floquet theorem~\cite{BerkolaikoKuchment2013}, the spectrum of the associated Hamiltonian $H_\infty$ is determined by the single-cell transfer matrix $Q(k)$, whose characteristic
equation is $\lambda^2-2\xi(k)\lambda+1=0$ with roots $\lambda_\pm=\xi\pm\sqrt{\xi^2-1}$ and half-trace
$\xi(k)=A\cos kh-B\mu(k)\sin kh$. The spectrum of~$H_\infty$ is governed entirely by $\xi$:
\begin{itemize}
  \item if $|\xi(k)|\le1$, then $\lambda_\pm=e^{\pm iKh}$ for a real Bloch quasimomentum $K\in[0,\pi/h]$, and
  $k^2$ lies in a \emph{pass-band}; the union of the pass-bands is the (absolutely continuous) spectrum
  of~$H_\infty$;
  \item if $|\xi(k)|>1$, then $\lambda_\pm$ are real with $|\lambda_+|>1>|\lambda_-|$, the Bloch solutions grow or decay exponentially, and $k^2$ lies in a \emph{band gap}, contained in the resolvent set of~$H_\infty$.
\end{itemize}
In the pass-bands the dispersion relation reads 
\begin{equation}\label{eq:dispersion}
  \cos Kh = \xi(k)
  = \frac{(\theta_1^2+\theta_2^2)\cos kh
    - \theta_3^2\,\mu(k)\sin kh}{2\theta_1\theta_2},
\end{equation}
the quantum-graph analogue of the Kronig--Penney relation~\cite{GriffithsSteinke}; the resonators enter through
$\mu(k)$, producing energy-dependent gaps absent in the classical model.

\smallskip
\noindent\textit{Finite arrays and the formation of bands.}
The finite-$N$ transmission probability fits into this picture through the same half-trace $\xi$, and the
band/gap dichotomy explains the sharpening of $T_N$ observed numerically, see Fig.~\ref{Fig8} for $N=1,2,6$.

\begin{figure}[h]
	\centering
	\includegraphics[scale=0.32]{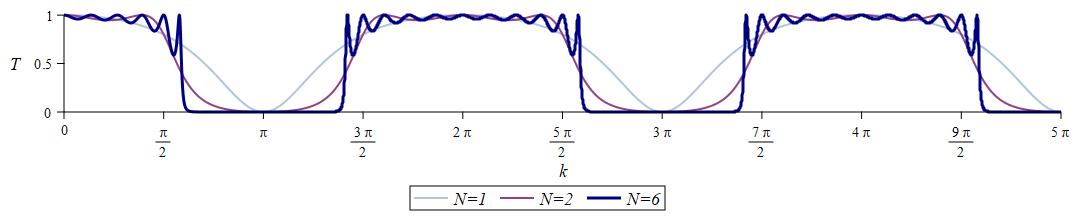}
	\caption{Plots of $T_N(k)$ for $N=1,2,6$. Parameters: $\theta_1=\theta_2=\theta_3=1$, 
		 $l=0.5$, and $h=1$.}
	\label{Fig8}
\end{figure}

\emph{Reflection resonances lie in the gaps.} By Theorem~\ref{thm:high-energy}, the zeros of $T_N$ occur at
the poles $\kappa_n$ of $\mu$, independently of~$N$. If $\sin(\kappa_n h)\ne0$ then $\kappa_n$ is a pole of~$\xi$, so $|\xi(\kappa_n)|=\infty$; if $\sin(\kappa_n h)=0$ then Lemma~\ref{lem:xi-asymptotics}(ii) gives
$|\xi(\kappa_n)|>1$. In either case $|\xi(\kappa_n)|>1$, so every reflection resonance lies strictly inside a band gap of~$H_\infty$, for every~$N$.

\emph{Transmission resonances fill the pass-bands.} By
Proposition~\ref{pro:zeros-ones}, $T_N(k)=1$ when $\xi(k)=\cos(\pi m/N)$ for some $m=1,\dots,N-1$ (and, for $\theta_1=\theta_2$, also when $\sin\phi(k)=0$, where $\xi(k)=\cos kh$). These values lie in $[-1,1]$, so every
transmission resonance lies in a pass-band. As $N\to\infty$ the values $\cos(\pi m/N)$ become dense in~$[-1,1]$, and the transmission resonances densely fill the pass-bands.

\emph{Behaviour at the band edges.} At a band edge~$k_0$, where $|\xi(k_0)|=1$, the Chebyshev polynomial
satisfies $|U_{N-1}(\xi(k_0))|=|U_{N-1}(\pm1)|=N$. If $z(k_0)\ne0$, then 
\[
  T_N(k_0)=\frac{1}{1+|z(k_0)|^2N^2}
  =\frac{1}{|z(k_0)|^2}\,N^{-2}\bigl(1+o(1)\bigr),
\]
a quadratic decay in~$N$; if $z(k_0)=0$, then $T_N(k_0)=1$ for all~$N$, and the band edge supports perfect transmission.

\smallskip
Together these mechanisms give a complete picture of the large-$N$ behaviour: inside the gaps $T_N$ decays
exponentially in~$N$, at the band edges it decays like $N^{-2}$ (unless $z=0$ there), and inside the pass-bands the perfect-transmission resonances become dense. For large arrays this produces the filter-like profile of~Fig.~\ref{Fig1} ($N=20$): wide intervals of near-total reflection, coinciding with the gaps of~$H_\infty$, alternating with pass-bands in which $T_N\approx1$. Theorem~\ref{thm:high-energy} and Property~\ref{prp:gap-asympt} of the previous section thus acquire a spectral interpretation in the $N\to\infty$ limit.

%% ==========================================
\subsection{Dependence on vertex parameters}
%
%% ==========================================
Finally, we briefly discuss how the interaction parameters $\theta_j$ affect the system's scattering profile. 

\smallskip 
\noindent\textit{Effect of coupling strength $\theta_3$.} 
We set $\theta_1=\theta_2=1$ and vary $\theta_3$. When $\theta_3=0$, the interaction between the scattering channel and the resonators is suppressed, and $T_N(k)=1$ identically, as immediately follows from~\eqref{eq:TN-A}. Figure~\ref{Fig9} shows $T_N(k)$ for $\theta_3=0.5,\,1.5$, and $3$ with $N=6$. As~$\theta_3$ increases from zero, reflection windows emerge 
near the poles of $\mu(k)$ at $k=\pi(2n-1)/(2l)$. These dips deepen and 
widen with $\theta_3$, while the transmission between them acquires 
a fine oscillatory structure reflecting the increasing strength of 
multiple scattering between the channel and the resonator array.

\begin{figure}[!h]
	\centering
	\includegraphics[scale=0.33]{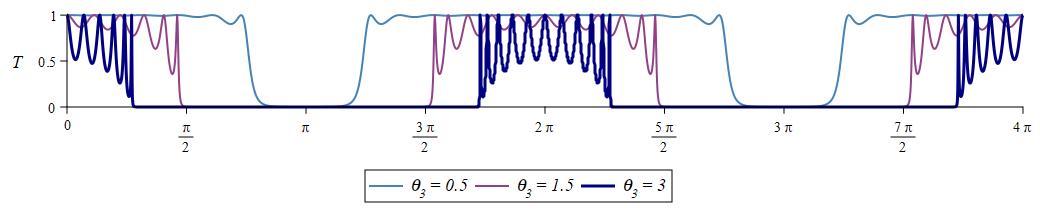}
	\caption{Plots of $T_N(k)$ for increasing values 
		of the coupling parameter $\theta_3$. Parameters: $\theta_1=\theta_2=1$, 
		 $l=0.5$, $h=1$, and $N=6$.}
	\label{Fig9}
\end{figure}

\smallskip
\noindent\textit{Effect of dipole interaction ratio $\theta_2/\theta_1$.}
We set $\theta=(1,\theta_2,1)$ and vary $\theta_2$. Figure~\ref{Fig10} shows $T_N(k)$ for $\theta_2=1.5,\,2,$ and $6$. Unlike in the previous scenarios, $T_N$ is no longer uniformly close to~$1$. As $\theta_2/\theta_1$ grows, the profile rapidly develops a structure resembling a $\delta'$-comb~\cite{GolovatyHrynivLavrynenko2025}: narrow transmission peaks separated by broad reflection regions. 

\begin{figure}[!h]
	\centering
	\includegraphics[scale=0.34]{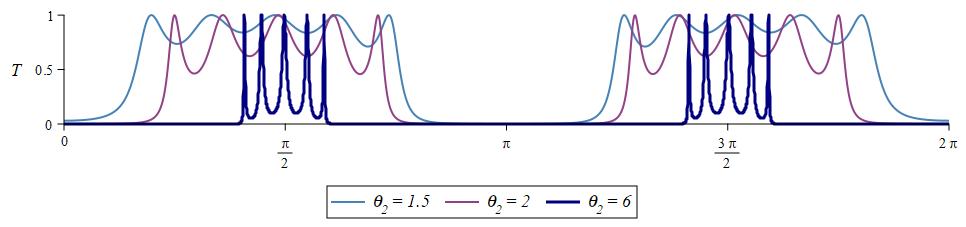}
	\caption{Plots of $T_N(k)$ for increasing ratio 
		$\theta_2/\theta_1$. Parameters: $\theta_1=\theta_3=1$, 
		 $l=0.5$, $h=1$, and $N=6$.}
	\label{Fig10}
\end{figure}

Formula~\eqref{eq:TN-A} sheds light on the formation of $\delta'$-comb structure. When $\theta_3^2\ll\theta_1^2+\theta_2^2$, the $\mu$-dependent terms in both $|z(k)|^2$ and $\xi(k)$ become negligible away from the poles of~$\mu$, and $\xi(k)$ is well approximated there by $\frac{\theta_1^2+\theta_2^2}{2\theta_1\theta_2}\cos kh$, 
which is precisely the half-trace of the $\delta'$-comb transfer matrix~\cite{GolovatyHrynivLavrynenko2025}. In this regime, $T_N$ is well approximated away from the anti-resonances by its $\theta_3=0$ limit, the transmission probability of the $\delta'$-comb. 

\smallskip
\noindent\textit{Effect of changing the resonator length~$l$.}
The scattering profile is sensitive to the full vector $\theta$ and to the resonator length, and tuning these parameters produces a wide variety of spectral filters. Figures~\ref{Fig11} and~\ref{Fig12} illustrate this for $\theta=(1,4,6)$ at two resonator lengths. Increasing~$l$ from $0.5$ to $2$ at fixed spacing $h=1$ moves the poles of~$\mu$ (the anti-resonances) at $\kappa_n=(2n-1)\pi/(2l)$ closer together, so the longer resonators produce more reflection windows per unit wavenumber and a correspondingly finer band--gap pattern.

\begin{figure}[h]
	\centering
	\includegraphics[scale=0.34]{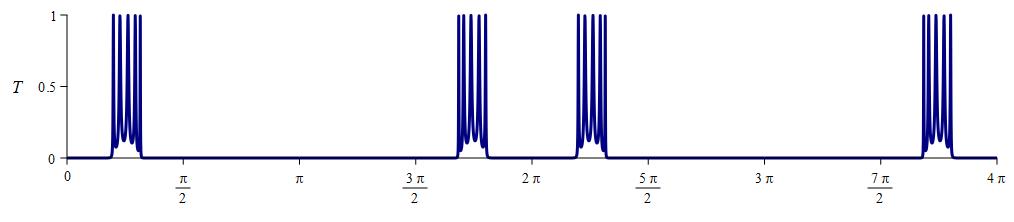}
	\caption{Transmission probability for $\theta_1=1$, $\theta_2=4$, $\theta_3=6$, $l=0.5$, $h=1$, and $N=6$. Compare with $l=2$ case in Fig.~\ref{Fig12}.}
	\label{Fig11}
\end{figure}
\begin{figure}[h]
	\centering
	\includegraphics[scale=0.56]{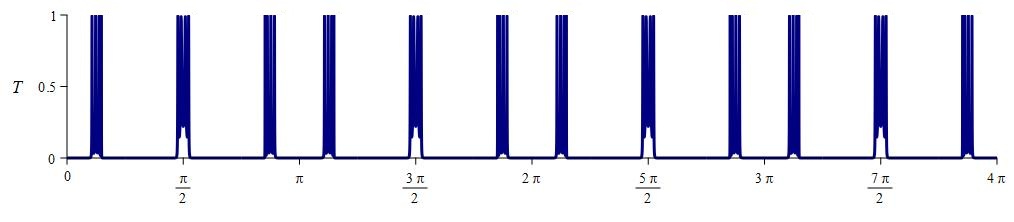}
	\caption{Transmission probability for $\theta_1=1$, $\theta_2=4$, $\theta_3=6$,  $l=2$, $h=1$, and $N=6$. Compare with $l=0.5$ case in Fig.~\ref{Fig11}.}
	\label{Fig12}
\end{figure}

%% ==========================================
\section{Conclusions and discussion}
\label{sec:conclusions}
%% ==========================================

We have studied quantum scattering on a line carrying a finite locally periodic array of resonators attached through scale-invariant vertex couplings. Reducing the problem to one with energy-dependent point interactions, we obtained a closed-form expression for the transmission probability $T_N$ (Theorem~\ref{thm:TN}) and recast it as $T_N=\cF(\phi,\psi)$, a single function of the resonator phase~$\phi$ and the propagation phase~$\psi$ (Proposition~\ref{pro:central}). This representation is the basis for the entire analysis: it yielded the symmetries and invariances of~$T_N$, its (quasi-)periodicity (Property~\ref{prp:periodicity}), and the location of its zeros and ones (Proposition~\ref{pro:zeros-ones}).

The central message is that scale invariance removes the high-energy transparency of ordinary one-dimensional
scattering. The transmission probability retains infinitely many anti-resonances and transmission resonances at arbitrarily high energy (Theorem~\ref{thm:high-energy}); near each anti-resonance it is suppressed at a rate $(k-\kappa)^{2N}$ set by the array, dropping to $(k-\kappa)^2$ at the exceptional energies where the collective effect is absent (Property~\ref{prp:gap-asympt}). For Robin endpoint conditions the transmission approaches the Neumann one away from the anti-resonances, but not uniformly near them (Property~\ref{prp:Robin-Neumann}). In the large-$N$ limit these features acquire a spectral interpretation: the anti-resonances lie in the gaps and
the transmission resonances fill the bands of the limiting periodic operator~$H_\infty$, so that the array realises a
tunable spectral filter.

The model extends in several directions. For instance, the channel may carry, instead of a single resonator at
each junction, a fixed cluster of resonators of different lengths and endpoint conditions, repeated periodically. Since resonators meeting at a common vertex contribute additively to the Dirichlet-to-Neumann data, the analysis extends with only notational changes once $\mu$ is replaced by the weighted sum
\[
  \mu_{\mathrm{eff}}(k)
  = \sum_{j=3}^{K}\theta_j^2\,
    \mu(k,l_j,\alpha_j,\beta_j),
\]
with $\mu(k,l,\alpha,\beta)$ given by~\eqref{eq:mu}. All results of Sections~\ref{sec:properties}
and~\ref{sec:asymptotics} hold with $\mu_{\mathrm{eff}}$ in place of~$\mu$, while the additional parameters of the cluster provide further freedom for fine-tuning the transmission profile.

A natural further direction is to treat genuinely aperiodic or disordered arrays, in which the resonator
lengths or couplings vary from cell to cell. The transfer-matrix product then no longer reduces to a single
matrix power, and the interplay between the scale invariance of the couplings and the disorder---in
particular, whether the high-energy persistence of reflection survives---remains to be understood.

\end{document}